\title[On Visibility Problems]{On Visibility Problems with an Infinite Discrete, set of Obstacles}
\date{}
\author{Michael Boshernitzan}
\address{Rice University, Houston, TX, {\tt michael@math.rice.edu}}
\author{Yaar Solomon}
\address{Ben-Gurion University of the Negev, Beer-Sheva, Israel, {\tt yaars@bgu.ac.il}} 
\newcommand{\N}{{\mathbb{N}}}
\newcommand{\Z}{{\mathbb{Z}}}
\newcommand{\R}{{\mathbb{R}}}
\newcommand{\C}{{\mathbb{C}}}
\renewcommand{\SS}{{\mathbb{S}}}
\newcommand{\dist}{\mathbf{dist}}
\newcommand{\vis}{\mathbf{vis}}
\newcommand{\df}{{\, \stackrel{\mathrm{def}}{=}\, }}
\newcommand{\minus}{\!\setminus\!}
\newcommand{\eps}{\varepsilon}
\newcommand{\DTEV}{\delta$-$T$-$\varepsilon$-$V}
\newcommand{\DFEV}{\delta$-$F$-$\varepsilon$-$V}
\newcommand{\absolute}[1] {\left|{#1}\right|}
\newcommand{\ceil}[1]{\left\lceil{#1}\right\rceil}
\newcommand{\floor}[1]{\left\lfloor{#1}\right\rfloor}
\newcommand{\norm}[1]{\left\|{#1}\right\|}
\newcommand {\ignore}[1]  {}
\font\sn = cmssi8 scaled \magstep0
\font\si = cmti8 scaled \magstep0
\long\def\comyaar#1{\ifdraft{{\color{blue}\si Yaar:``#1'' }}\else\ignorespaces\fi}
\long\def\commichael#1{\ifdraft{\color{red}\sn MB:``#1'' }\else\ignorespaces\fi}
\theoremstyle{plain}
\newtheorem{thm}{Theorem}[section]
\newtheorem{lem}[thm]{Lemma}
\newtheorem{prop}[thm]{Proposition}
\theoremstyle{definition}
\newtheorem{definition}[thm]{Definition}
\newtheorem{question}[thm]{Question}
\newtheorem{remark}[thm]{Remark}
\numberwithin{equation}{section}
\newif\ifdraft\drafttrue
\begin{document}

\ignore{This paper studies visibility problems in Euclidean spaces where the obstacles are points 
of an infinite discrete set. We prove several results that relate between the notion 
of visibility to the growth rate of the set of obstacles in $\R^d$. 
Our main result is the following. 
For every $\varepsilon>0$ and every relatively dense set $Y\subseteq\R^2$, 
there are points in the plane that are $\varepsilon$-hidden by $Y$. 
Namely, points that any ray initiated from them meets the 
$\varepsilon$-neighborhood of $Y$. 
This result generalizes a theorem of Dumitrescu and Jiang, 
which settled Mitchell's dark forest conjecture. \\}

\begin{abstract}
This paper studies visibility problems in Euclidean spaces $\R^d$ where the obstacles are the points
of infinite discrete sets  $Y\subseteq\R^d$.
A point  $x\in\R^d$  is called $\varepsilon$-visible for $Y$ (notation:  $x\in\vis(Y, \varepsilon))$  if there exists 
a ray  $L\subseteq\R^d$  emanating from $x$  such that  $\norm{y-z}\geq\varepsilon$,  for all $y\in Y\setminus\{x\}$  and  $z\in L$.
A point $x\in\R^d$  is called visible for $Y$   (notation:  $x\in\vis(Y))$  if  $x\in\vis(Y, \varepsilon))$, for some $\varepsilon>0$.\\
Our main result is the following. For every $\varepsilon>0$ and every relatively dense set $Y\subseteq\R^2$, 
$\vis(Y, \varepsilon))\neq\R^2$. This result generalizes a theorem of Dumitrescu and Jiang, 
which settled Mitchell's dark forest conjecture. On the other hand, we show that
there exists a relatively dense subset  $Y\subseteq \Z^d$  such that    $\vis(Y)=\R^d$.
(One easily verifies that $\vis(\Z^d)=\R^d\setminus\Z^d$, for all $d\geq 2$).
We derive a number of other results clarifying how the size of a sets  $Y\subseteq\R^d$
may affect the sets  $\vis(Y)$  and  $\vis(Y,\varepsilon)$.
We present a Ramsey type result concerning uniformly separated subsets of $\R^2$ whose growth is faster than linear.
\end{abstract}

\maketitle

\section{Introduction}\label{sec:introduction}

%%%%%%%%%%%%%%%%%%%%%%%%%%%%%%%%%%%%%%
We use the following standard notations for a fixed integer $d\geq2$.  

For $x\in \R^d$,  we write  $\norm{x}$  to denote the Euclidean norm of $x$. 
%The closure and the boundary of a set  $A\subseteq\R^d$  are denoted by $\bar A,\partial A\subseteq\R^d$, 
%respectively.  
%%%%%%%%%%%%%%%
%For~$r>0$, denote by  $\B(x, r)$  the ball of radius $r$ centered at $x\in\R^d$.  
%Denote by  $\B(r)=\B({\bf0},r)$ the ball of radius $r$ centered at the origin ${\bf0}\in\R^d$.  Thus
%\begin{equation}\label{eq:balls}
%\B(x,r)\!:=\{y\in\R^d\mid \norm{y-x}<r\};  \quad  \B(r)\!:=\{y\in\R^d\mid \norm{y}<r\}.
%\end{equation}
Denote by  $\mathbb S^{d-1}= \{x\in\R^{d}\mid \norm x=1\}$   the unit sphere 
centered at the origin $\bf0$.
For two non-empty subsets  $A,B\subseteq \R^d$,  define
\begin{equation}\label{eq:dist}
\dist(A,B)=\inf\{\norm{a-b}\!: a\in A,\:  b\in B\}.
\end{equation}
Given $x\in \R^d$ and $v\in\mathbb{S}^{d-1}$, \emph{by the ray
from $x$ in direction $v$} we mean the set 
\[L_{x,v}=\{x+tv\mid t\in[0,\infty)\}\subseteq\R^{d}.\]
\smallskip

%%%%%%%%%%
\begin{definition}\label{def:vis}
For a non-empty subset $Y\subseteq\R^{d}$, a direction $v\in\mathbb{S}^{d-1}$ and $\eps>0$,
define the following subsets of $\R^{d}$:
\begin{subequations}\label{eq:vis4}
\begin{align}
\vis(Y,v)\hspace{-8mm}& &&\df&&\hspace{-8mm}\{x\in \R^{d}\mid \dist(L_{x,v},Y\!\setminus\!\{x\})>0\},\\
\vis(Y)\hspace{-8mm}& &&\df&&\hspace{-8mm}\{x\in \R^{d}\mid x\in\vis(Y,v)\, \mbox{ for some }\, v\in\mathbb{S}^{d-1}\},\\
\vis(Y,v,\varepsilon)\hspace{-8mm}& &&\df&&\hspace{-8mm}\{x\in \R^{d}\mid \dist(L_{x,v},Y\!\setminus\!\{x\})\ge 
\varepsilon\},\\
\vis(Y;\varepsilon)\hspace{-8mm}& &&\df&&\hspace{-8mm}\{x\in \R^{d}\mid x\in\vis(Y,v,\varepsilon)\, \mbox{ for some }\, v\in\mathbb{S}^{d-1}\}.
\end{align}
\end{subequations}
\smallskip

If there is no ambiguity regarding the choice of  
$Y\subseteq\R^{d}$, then
\begin{itemize}\label{word description}
\item Points $x\in\vis(Y,v)$ are called \emph{visible from direction $v$}.
\item Points $x\in\vis(Y)$ are called \emph{visible}; \ points $x\in\R^d\!\setminus\!\vis(Y)$  
     are called \emph{hidden}.
\item Points $x\in\vis(Y,v,\varepsilon)$ are called \emph{$\eps$-visible from direction $v$}.
\item Points $x\in\vis(Y;\varepsilon)$ are called \emph{$\varepsilon$-visible}; \
         points $x\in\R^d\!\setminus\!\vis(Y;\varepsilon)$ are called \emph{$\varepsilon$-hidden}.
\end{itemize}

\noindent We add the specification "for $Y$" in the above word 
definitions if we want to indicate the dependence of these sets on $Y$.
\end{definition}

For $x\in\R^d$ and $r>0$, denote by $B(x,r)=\{y\in\R^d\mid \norm {y-x}<r\}$
the open  $r$-balls around $x$.
A set $Y\subseteq\R^d$ is called \emph{discrete} if the intersection of  $Y$  with every ball $B(r,x)$  is finite. 

By the growth rate of a discrete subset  $Y\subseteq\R^{d}$ we mean the integer valued function 
defined by the formula
\[
G_{Y}(r)=\#\big(\{y\in Y\mid \norm{y}< r\}\big) = \#\big(Y\cap  B({\bf0},r)\big) \qquad  (\text{for } \ r\geq0).
\]
(Here and henceforth  $\#S$ stands for the cardinality of a set $S$). 

%Equivalently,  $G_{Y}(r)=\#\big(Y\cap  B({\bf0},r)\big)$  where  ${\bf0}\in\R^d$  stands for the origin.

A set $Y\subseteq\R^d$ is called \emph{relatively dense} if there is some $r>0$ so that every ball 
of radius $r$ intersects  $Y$,  i.\,e.\  $B(x,r)\cap Y\neq\varnothing$,  for  all $x\in\R^d$.
$Y$ is \emph{uni\-form\-ly separated} if there is some $\delta>0$ such that 
for every $y_1,y_2\in Y$ we have $\dist(y_1,y_2)\ge\delta$.  
We say $Y$ is $r$-dense, or $\delta$-separated, when we like to specify the constants $r$ and $\delta$. 

Note that if $Y\subseteq\R^d$ is  a uniformly separated set then 
\mbox{$\limsup_{r\to\infty}\limits\absolute{\frac{G_Y(r)}{r^d}}<\infty$}, and if  $Y\subseteq\R^d$ is discrete
and relatively dense then  $\limsup_{r\to\infty}\limits\absolute{\frac{r^d}{G_Y(r)}}<\infty$.

%%%%%%%%%%%%%%%%%%%%%%%%%%%%%%%%%%%%%%%
One of the objectives in this paper is to investigate how the growth rate of the set $Y$  %(in particular,  the growth of the function $G_Y(r)$) 
may be related to certain properties of the four sets defined in \eqref{eq:vis4}. Note that  $Y_1\subseteq Y_2$  implies the inequality
$
G_{Y_1}(r)\leq G_{Y_2}(r)
$
(for all $r>0$)  and the inclusions
\begin{align}
\vis(Y_2,v)&\subseteq\vis(Y_1,v),   &\vis(Y_2)&\subseteq\vis(Y_1),\label{eq:inclusion}\\
\vis(Y_2; \eps)&\subseteq\vis(Y_1; \eps),  &\vis(Y_2,v,\eps)&\subseteq\vis(Y_1,v, \eps).
\end{align}

In particular, we establish the following results for discrete subsets $Y\!\subseteq\R^2$: 

\begin{enumerate}
\item[1.]  If  $Y\!\subseteq\R^2$  is relatively dense then  $\vis(Y;\varepsilon)\neq\R^2$, for all  $\varepsilon>0$ 
(See Theorem \ref{thm:Michael's_conj}).
\item[2.]  On the other hand, there exists a relatively dense set  $Y\subseteq\R^d\: (d\ge 2)$ such that  $\vis(Y)=\R^d$  (See Theorem
\ref{thm:Every_point_is_visible}).
\item[3.]  If  $Y\subseteq\R^d\: (d\ge 2)$ is discrete and if  $G_Y(t)<\frac{t^{d-1}}{\log^{1+\varepsilon} t}$,  for some $\varepsilon>0$ and all large enough $t$,  then
$\vis(Y)=\R^d$ (See Theorem
\ref{thm:small_growth_implies_full_visibility_2}). %\ref{thm:small_growth_implies_full_visibility_1}).
\item[4.] On the other hand, we exhibit an uniformly separated set \ \text{$Y\subseteq\R^2$}  
  such that \ $\lim_{t\to\infty}\limits{\frac{G_Y(t)\log t}t}=1$ and  $\vis(Y)\!\!=\varnothing$
   (See Theorem \ref{thm:sublinear_growth_and_no_visibility}).
\end{enumerate}

%%%%%%%%%%%%%%%%%%%%%%%%%%%%%%%%%%%%%%
%%%%%%%%%%%%  Background  %%%%%%%%%%%%

The visibility notions from Definition \ref{def:vis} relate to the well-known P\'{o}lya's orchard problem (see \cite{Polya, Polya_book}): What is the minimal radius of trees (viewed as disks in $\R^2$), that stand at the integer points in a ball of radius $R$, for them to completely block the visibility of the origin, from the boundary of the ball? this problem was solved by Allen in \cite{Allen}, and some variants of it appears in \cite{HK, K}. One may also consider a maximal packing of unit balls in a ball of radius $R$, instead of balls at integer points, and ask for which $R$ (if any) there exists points which are not visible from the boundary? The existence of such an $R$ is known as Mitchell's dark forest conjecture, see \cite{Mitchell}. Mitchell's conjecture was proved in \cite{DJ}. Another related notion is the following. $Y\subseteq\R^d$ is called a \emph{dense forest} if every point $x\in \R^d$ is hidden, and for every $\eps$ there is a uniform upper bound $T(\eps)$ on the length of the line segments that are not $\eps$-close to $Y$. $T(\eps)$ is called the visibility function of $Y$. Questions regarding the existence of dense forests that are uniformly separated, or of bounded density, and bounds on the visibility functions of them, were studied in \cite{Adiceam, Alon, Bishop, SW}.

%%%%%%%%%%%%%%%%%%%%%%%%%%%%%%%%%%%%%%
%%%%%%%%%%%%%  Results  %%%%%%%%%%%%%%

%Our line of logic is that small growth rate should imply many visible points, where large growth rate should correspond to many hidden points. 
Our main results are the following. 

\begin{thm}\label{thm:Michael's_conj}
For every $\varepsilon>0$ and every relatively dense set $Y\subseteq\R^2$ there exist a number $T>0$ and 
a point $x\in\R^2$ such that for every $v\in\SS^1$ we have 
\[\dist(Y,\{x+tv\mid t\in[0,T]\})<\varepsilon.\] 
In particular, we have $\vis(Y;\varepsilon)\neq\R^2$.
\end{thm}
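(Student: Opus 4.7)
The plan is a proof by contradiction, mirroring and generalizing the strategy of Dumitrescu--Jiang: first use compactness to upgrade the failure of the conclusion into an infinite free ray at every point, then derive a contradiction with $r$-density via a pigeonhole-covering argument on the free rays. Fix $r>0$ with $Y$ being $r$-dense, and assume for contradiction that for every $x\in\R^2$ and every $T>0$ there exists $v\in\SS^1$ with $\dist(Y,\{x+tv:t\in[0,T]\})\geq\varepsilon$.

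\textbf{Step 1 (Compactness).} For each fixed $x\in\R^2$, the sets $V_T(x)=\{v\in\SS^1:\dist(Y,\{x+tv:t\in[0,T]\})\geq\varepsilon\}$ are closed in the compact space $\SS^1$, nested decreasingly in $T$, and nonempty by hypothesis. Thus $V_\infty(x):=\bigcap_{T>0}V_T(x)\neq\varnothing$, and for any measurable selection $v(x)\in V_\infty(x)$ the infinite ray $L_{x,v(x)}$ satisfies $\dist(Y,L_{x,v(x)})\geq\varepsilon$; equivalently, its open $\varepsilon$-neighborhood is disjoint from $Y$.

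\textbf{Step 2 (Pigeonhole on directions).} Choose $N$ large (to be calibrated against $r/\varepsilon$) and partition $\SS^1$ into $N$ arcs of equal measure. For a large ball $B(\mathbf{0},R)$, pigeonhole yields an arc $I$ and a subset $E\subseteq B(\mathbf{0},R)$ with $|E|\geq \pi R^2/N$ on which $v(x)\in I$. After a rotation, take $I$ centered at $e_1=(1,0)$. A secondary pigeonhole on the second coordinate of $x\in E$ then extracts $E'\subseteq E$ whose second coordinates $\varepsilon$-densely cover an interval of length $\geq 2r$.

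\textbf{Step 3 (Density contradiction).} Consider $U=\bigcup_{x\in E'}\{y\in\R^2:\dist(y,L_{x,v(x)})<\varepsilon\}$. By Step 1, $U\cap Y=\varnothing$. Provided $N$ is large enough that the perpendicular drift $R\cdot(2\pi/N)$ of a ray over distance $R$ is less than $\varepsilon$, these strips stay essentially parallel to $e_1$ throughout the relevant region; combined with the $\varepsilon$-density of the second coordinates of $E'$, they cover some ball $B(z,r)\subseteq U$, contradicting $r$-density.

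\textbf{Main obstacle.} The hard part is the quantitative calibration in Step 3. Large planar measure of $E$ alone does not force the shadow of $E$ on an axis perpendicular to $e_1$ to be $\varepsilon$-dense over a $2r$-interval, which is precisely what the secondary pigeonhole must deliver; and this must be executed in harmony with the primary pigeonhole on direction so that angular drift across the working scale remains $\ll\varepsilon$. Balancing $N$, $R$, $r$, and $\varepsilon$ so that the $\varepsilon$-strips genuinely enclose a ball of radius $r$—rather than merely a set of measure $\gtrsim r^2$—is where I expect the principal technical work to lie, and likely requires an iterative or multi-scale refinement of the simple pigeonhole sketched above.
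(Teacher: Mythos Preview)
Your compactness reduction (Step 1) is fine, but the gap you flag in your ``Main obstacle'' paragraph is structural, not a calibration issue, and the sketch gives no mechanism to close it. Nothing prevents the selection $x\mapsto v(x)$ from being arranged so that, for every small arc $I\subseteq\SS^1$, the set $\{x:v(x)\in I\}$ is a union of horizontal slabs each of thickness $\ll\varepsilon$; then the free $\varepsilon$-strips issuing from $E$ collapse to a few genuinely distinct strips, and no amount of planar measure in $E$ forces their union to contain an $r$-ball. The tension you note---small $N$ kills parallelism, large $N$ kills vertical density of the projection---does not resolve by iterating the same two pigeonholes, because the underlying obstruction is that distinct base points can produce essentially the \emph{same} free strip. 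A multi-scale refinement would need some a priori reason why many of the selected rays are quantitatively \emph{separated}, and your setup provides none.

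The paper sidesteps this via a different decomposition. It works on the circles $\partial B(z,\varepsilon)$ for $z\in Y$ and classifies $\varepsilon$-visible boundary points as \emph{tangential} (the witnessing ray nearly tangent to the circle) or \emph{frontal}. A local geometric argument shows every arc contains a non-tangentially-visible point. For the frontal case, the key is that frontality forces the ray's continuation to penetrate $B(z,\varepsilon)$ to a definite depth $\mu=\mu(\varepsilon,\delta)>0$; an elementary lemma then shows that frontal rays attached to \emph{distinct} points $z\in Y$ hit a fixed far reference segment at points that are $\gtrsim\mu$ apart. That separation---precisely what your Step 2 cannot extract from measure alone---is what feeds a one-dimensional multi-scale pigeonhole (the Dumitrescu--Jiang interval lemma) and yields enough well-spaced, nearly-parallel free rays to trap an $r$-ball, giving the contradiction. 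So the missing ingredient is not sharper constants but a device---here the tangential/frontal split keyed to the obstacle set $Y$ itself---that converts ``a free ray through every point'' into ``many \emph{separated} free rays.''
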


%\comyaar{We actually prove that for every relatively dense $Y$ there is a relatively dense set $Z\subseteq\R^2$ of $\varepsilon$-hidden points. Namely, s.t. $Z\cap\vis(Y;\varepsilon)=\varnothing$. I think that this stronger statement should be here. I can easily change the proof accordingly. This allows to apply the theorem repeatedly, if it helps anyone.} 

%\commichael{The second, stronger result can be deduced from the first result using compactnes arguments. I think there is no need to state the second result,  unless indeed you see some application.} 

Theorem \ref{thm:Michael's_conj} generalizes the main result of Dumitrescu and Jiang from \cite{DJ}  
that settled Mitchell's dark forest conjecture. 

The following two theorems (Theorem \ref{thm:sublinear_growth_and_no_visibility}
and \ref{thm:small_growth_implies_full_visibility_2})
address the connection between the growth rate of a dis\-crete set $Y$  and the size of the set   $\vis(Y)$.
\smallskip

\begin{thm}\label{thm:sublinear_growth_and_no_visibility}
There exists a uniformly separated set $Y\subseteq\R^2$  such that\, $\vis(Y)=\varnothing$  and 
\[
\lim_{r\to\infty}\tfrac{G_Y(r)\log r}r\,=1.
\]
(In particular, the growth rate of such  $Y$  is sublinear,  $\lim_{r\to\infty}\limits \frac{G_{Y}(r)}r=0$,  and all points
in $\R^2$  are hidden for $Y$).
\end{thm}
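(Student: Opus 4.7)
The plan is a scale-by-scale construction. Partition $\R^2$ by dyadic annuli $A_k=\{z:2^k\le\norm{z}<2^{k+1}\}$, and for each $k$ place $M_k\asymp 2^k/k$ uniformly $\delta$-separated points inside $A_k$ whose angular coordinates lie in a single short arc $I_k\subseteq[0,2\pi)$ of angular length $w_k\asymp 1/(k\log k)$. Because the area of the annular sector over $I_k$ is $\asymp 4^k/(k\log k)$, which is much larger than $M_k\delta^2\asymp 2^k\delta^2/k$, there is ample room to insist the $M_k$ points form a quasi-lattice with uniform separation $\delta$. The sectors $I_k$ will be chosen so that \emph{every} $\theta\in[0,2\pi)$ lies in $I_k$ for infinitely many $k$; this is the key design requirement, and it is made possible by the divergence $\sum_k w_k=\sum_k 1/(k\log k)=\infty$. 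A clean way to do this deterministically is to let $I_k$ itself be a union of a net of sub-arcs so that the union of the $I_k$'s contains a "shrinking tube" around every direction; alternatively one selects the centers $\theta_k^\ast$ via a pseudo-random/Borel--Cantelli-style scheme so as to pin down the outcome uniformly in $\theta$.

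I would then verify three properties. First, \emph{growth:} $G_Y(2^{K+1})=\sum_{k\le K}M_k\asymp 2^{K+1}/K\asymp r/\log r$, and after tuning the multiplicative constants (and, if necessary, refining the dyadic scales into thinner rings of logarithmic width so that $G_Y$ is nearly affine in $r/\log r$ rather than piecewise constant), one attains the sharp limit $1$. Second, \emph{uniform separation:} immediate within each annulus and between annuli, since successive annuli are multiplicatively separated. Third, \emph{$\vis(Y)=\varnothing$:} fix $x\in\R^2$ and $v\in\SS^1$, and set $\theta_v=\arg(v)$. For all $k$ with $2^k\gg\norm{x}$, the angular coordinate of points on the ray $L_{x,v}\cap A_k$ differs from $\theta_v$ by at most $O(\norm{x}/2^k)$, which is much smaller than $w_k$. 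Hence whenever $\theta_v\in I_k$ (and this holds for infinitely many $k$), the portion of $L_{x,v}$ lying in $A_k$ lies entirely in the $I_k$-sector and has length $\asymp 2^k$. A quasi-lattice of $M_k\asymp 2^k/k$ points filling a region of area $\asymp 4^k/(k\log k)$ has local spacing of order $\sqrt{2^k/\log k}$ in the transverse direction, so a segment of length $\asymp 2^k$ passing through the region comes within distance $O(1/\log k)$ of some $Y$-point (this is where the quasi-lattice rather than merely separated structure is used). Letting $k\to\infty$ along the subsequence where $\theta_v\in I_k$ gives $\dist(L_{x,v},Y)=0$, hence $x\notin\vis(Y,v)$.

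I expect the main obstacle to be Step~3, and specifically two entangled subtleties. The first is the "every $\theta$" requirement for the $I_k$: a naïve random construction only gives it for almost every $\theta$, and passing from almost every to every direction requires either a deterministic net-refinement of the $I_k$'s or a more careful probabilistic argument using monotonicity in the arc. The second is the geometric lemma that a ray of length $L$ in the sector meets \emph{some} $Y$-point within distance $\asymp 1/\log k$, not merely that the expected nearest-neighbor distance is that small; this forces the $M_k$ points to be laid out as an approximately uniform grid inside the sector rather than in any clumped pattern. Once both points are pinned down, growth rate and uniform separation are essentially bookkeeping.
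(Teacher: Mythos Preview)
Your overall architecture (dyadic annuli, angular sectors $I_k$ of width $w_k\asymp 1/(k\log k)$ with $\sum w_k=\infty$ so that every direction is covered infinitely often) is a reasonable route, and the growth-rate and separation bookkeeping would go through once the geometry is settled.

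The gap is in the key geometric step. You assert that a quasi-lattice of $M_k\asymp 2^k/k$ points in a region of area $\asymp 4^k/(k\log k)$ has spacing $s\asymp\sqrt{2^k/\log k}$ and that a segment of length $L\asymp 2^k$ through it comes within $O(1/\log k)$ of some point. But for an isotropic grid of mesh $s$, a line nearly parallel to a lattice direction only comes within $O(s)$ of the nearest point, and here $s\to\infty$. The bound $s^2/L=1/\log k$ you appear to be invoking holds only for lines whose slope relative to the lattice is Diophantine-good; since the admissible ray directions fill all of $I_k$, whose width $w_k\asymp 1/(k\log k)$ is enormous compared with the relevant Diophantine scale, some $\theta_v\in I_k$ will be at a bad (near-rational) slope to \emph{any} fixed lattice orientation, and for those the estimate collapses. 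So an isotropic quasi-lattice cannot work uniformly in $\theta_v$. The scheme can be rescued, but only with a much more delicate placement: confine the $M_k$ points to a thin radial band of width $\asymp\log k$ and stagger the transversal coordinates across $\asymp\log k$ rows so that their union is a $(1/\log k)$-net; the thinness of the band makes the shift $a_j\theta$ essentially constant across points, so the net survives for every $\theta\in I_k$. That is substantially more than ``use a quasi-lattice''.

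The paper bypasses both of your flagged obstacles with a single explicit spiral: $y_k=r_ke^{i\phi_k}$ with $r_k=k\log k$ and $\phi_k=\sqrt{\log\log k}$. Since $\phi_k\to\infty$, the polygonal spiral $\bigcup_k\overline{y_ky_{k+1}}$ is crossed by every ray infinitely often, so no ``every $\theta$'' covering has to be engineered; and a direct area-of-triangle estimate shows that at each crossing the ray lies within $O\big((\log\log k)^{-1/2}\big)$ of $y_k$, with no lattice-versus-ray angle issue. Uniform separation and the growth rate follow immediately from the radii $r_k$.
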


\begin{thm}\label{thm:small_growth_implies_full_visibility_2}
Let  $Y\subseteq\R^d$  be a discrete set.  Then the implications \ (1)\,$\Rightarrow$(2)\,$\Rightarrow$(3) \
take place, where
\begin{enumerate}
\item $G_{Y}(r)<\frac{r^{d-1}}{\log^{1+\varepsilon} r}$,  
for some $\varepsilon>0$ and all large $r$. \\
\item  $\sum_{y\in Y\setminus\{{\bf0}\}}\limits\frac1{\norm{y}^{d-1}}<\infty$.\\
\item For every  $x\in\R^d$,  the relation  $x\in\vis(Y,v)$  holds for Lebesgue almost all  $v\in\SS^{d-1}$.
In particular,   $\vis(Y)=\R^d$.
\end{enumerate}
\end{thm}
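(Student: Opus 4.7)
The plan is to prove the two implications separately, each by a direct estimate.

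For $(1)\Rightarrow(2)$, I would split $Y\setminus\{\mathbf{0}\}$ into the unit ball $B(\mathbf{0},1)$ (a finite set, since $Y$ is discrete, whose points are at positive distance from $\mathbf{0}$) and the dyadic annuli $A_k=\{y\mid 2^k\le\norm{y}<2^{k+1}\}$ for $k\ge 0$. The ball contributes finitely many finite terms. On $A_k$ one has $\norm{y}^{-(d-1)}\le 2^{-k(d-1)}$ and $\#(Y\cap A_k)\le G_Y(2^{k+1})$, so hypothesis (1) gives
\[
\sum_{y\in Y\cap A_k}\frac{1}{\norm{y}^{d-1}}\le\frac{G_Y(2^{k+1})}{2^{k(d-1)}}\le\frac{C}{(k+1)^{1+\varepsilon}}
\]
for large $k$ and some constant $C$, and summing over $k$ yields the convergence in (2).

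For $(2)\Rightarrow(3)$, fix $x\in\R^d$. Because $Y$ is discrete there exists $\delta>0$ with $B(x,\delta)\cap(Y\setminus\{x\})=\varnothing$. For each $\eps<\delta$ and each $y\in Y\setminus\{x\}$, the set of directions $v\in\SS^{d-1}$ along which the ray $L_{x,v}$ passes within distance $\eps$ of $y$ is contained in a spherical cap around $(y-x)/\norm{y-x}$ of angular radius $\arcsin(\eps/\norm{y-x})$, and hence has surface measure at most $C_d(\eps/\norm{y-x})^{d-1}$ for a dimensional constant $C_d$. Summing over $y$,
\[
\mu\bigl\{v\in\SS^{d-1}\mid \dist(L_{x,v},Y\setminus\{x\})<\eps\bigr\}\le C_d\,\eps^{d-1}\!\!\sum_{y\in Y\setminus\{x\}}\frac{1}{\norm{y-x}^{d-1}},
\]
and the latter sum is finite: the tail over $\norm{y}\ge 2\norm{x}$ is dominated by $2^{d-1}$ times the tail in (2), while the finite head (finitely many points of $Y$ in $B(\mathbf{0},2\norm{x})$, each at positive distance from $x$) contributes a finite amount. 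Since $\{v\mid x\notin\vis(Y,v)\}$ is the intersection over $\eps>0$ of the above sets, letting $\eps\to 0$ shows it has surface measure zero. Thus $x\in\vis(Y,v)$ for almost every $v$, and in particular $x\in\vis(Y)$; since $x$ was arbitrary, $\vis(Y)=\R^d$.

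The only subtlety worth flagging is the matched scaling between the two implications: the $(d-1)$-dimensional surface measure of a small spherical cap of angular radius $\theta$ is of order $\theta^{d-1}$, exactly the exponent appearing in (2), and this is also the reason (1) is stated with $r^{d-1}$ rather than $r^d$. Once this scaling is identified each step is routine bookkeeping; discreteness of $Y$ is used only to ensure that a small neighborhood of $x$ contains no point of $Y\setminus\{x\}$, which eliminates any finite-head obstruction in the cap estimate.
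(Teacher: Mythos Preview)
Your proof is correct and follows essentially the same approach as the paper: a dyadic decomposition for $(1)\Rightarrow(2)$, and a spherical-cap measure bound summed over $y$ followed by letting $\eps\to 0$ for $(2)\Rightarrow(3)$. The only cosmetic differences are that the paper writes out the $d=2$ case explicitly and first proves the cap estimate at the origin before shifting to a general $x$, whereas you work in general $d$ and handle the shift to $x$ directly.
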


It is easy to see that, for all $d\geq2$,  $\vis(\Z^d)=\R^d\!\setminus\!\Z^d$.  On the other hand,
the following theorem shows existence of large (density 1 and relatively dense) subsets  $Y\subseteq\Z^d$  
with no hidden points for $Y$.

\begin{thm}\label{thm:Every_point_is_visible}
	Let $d\ge2$. Then, for any $\varepsilon>0$  and  $M>1$,
	there exists a subset $Y\subseteq\Z^d$  which enjoys the following properties:
	\begin{enumerate}
   	   \item $\vis(Y)=\R^d$  (that is,  there are no hidden points for $Y$).
	   \item The growth rate of the complement set\,  $\tilde Y=\Z^d\!\setminus\! Y$  is at most linear;
	       moreover, $G_{\tilde Y}(r)=\#\big(\tilde Y\cap B({\bf0},r)\big)<\varepsilon r$,  for all  $r>0$.
	   \item  $Y$  is relatively dense in  $\Z^d$.
	   \item  The set  $\tilde Y$ is  $M$-separated.
        \end{enumerate}
\end{thm}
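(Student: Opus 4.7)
Since $\vis(\Z^d)=\R^d\!\setminus\!\Z^d$ (as observed in the paragraph preceding the theorem) and the inclusion \eqref{eq:inclusion} gives $\vis(\Z^d)\subseteq\vis(Y)$ whenever $Y\subseteq\Z^d$, the task reduces to arranging that every lattice point $p\in\Z^d$ be visible for $Y$. The plan is to enumerate $\Z^d=\{p_1,p_2,\ldots\}$ in order of non-decreasing norm and to choose, inductively for each $i$, a primitive integer vector $v_i\in\Z^d$ pointing outward from $p_i$ (i.e.\ $\inpro{v_i}{p_i}\geq 0$) with $\norm{v_i}\geq C\cdot 2^i$ for a large constant $C=C(\varepsilon,M)$ to be fixed. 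Then set $T_i:=\{p_i+k v_i:k\geq 1\}$, $\tilde Y:=\bigcup_i T_i$, and $Y:=\Z^d\!\setminus\!\tilde Y$. Visibility of $p_i$ along $v_i$ is immediate from primitivity: the line $p_i+\R v_i$ meets $\Z^d$ in exactly $\{p_i+k v_i:k\in\Z\}$, and on the forward ray the only possible $Y$-obstruction is $p_i$ itself (which is excluded); off-line lattice points project onto nonzero vectors of the $(d-1)$-dimensional sublattice of $v_i^\perp$ of covolume $1/\norm{v_i}$ and therefore sit at some positive distance $\delta_{v_i}>0$ from the line. Hence $\dist(L_{p_i,v_i},Y\!\setminus\!\{p_i\})\geq\delta_{v_i}>0$, which proves~(1).

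Property~(2) and the intra-trail part of~(4) follow easily from the scheduled growth of $\norm{v_i}$: because each $T_i$ is outward-pointing, $\#(T_i\cap B({\bf0},r))\leq 2r/\norm{v_i}$ when $\norm{p_i}<r$ and is $0$ otherwise, so $G_{\tilde Y}(r)\leq 2r\sum_{i\geq 1}(C\cdot 2^i)^{-1}\leq 2r/C<\varepsilon r$ once $C>2/\varepsilon$; and any two points of $T_i$ lie at distance $\geq\norm{v_i}\geq M$ once $C\geq M$.

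The main obstacle is the inter-trail $M$-separation needed for~(4): at step $i$, one must pick $v_i$ so that no $p_i+k v_i$ $(k\geq 1)$ lies within $M$ of any $z\in\tilde Y_{i-1}=T_1\cup\cdots\cup T_{i-1}$. A bad $v_i$ belongs to one of the balls $B((z-p_i)/k,M/k)$ for $z\in\tilde Y_{i-1}$, $k\geq 1$. The key structural fact is that two lattice rays $T_i,T_j$ in generic position come within $M$ of each other only inside a short ``crossing zone'' of length $\lesssim M/\sin\theta_{ij}$ in the ambient plane they span, so the bad set of $v_i$'s is confined to a union of thin angular cones around the previous directions $v_j$ together with some coset corrections coming from the sublattice $\Z v_i+\Z v_j$. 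Because $\norm{v_j}\geq C\cdot 2^j$ is scheduled to grow geometrically, the cumulative angular measure of these bad cones is bounded by $\sum_j O(M/\norm{v_j})=O(M/C)$, which for $C$ large is negligible compared with the full angular measure of outward primitive integer vectors of norm in $[C\cdot 2^i,2C\cdot 2^i]$ (of which there are $\Theta((C\cdot 2^i)^d)$); a volume/lattice-count then produces an admissible $v_i$. Completing this inductive existence argument—particularly the Diophantine bookkeeping in $d=2$, where a naive union bound on the bad set involves the divergent sum $\sum_{z\in\tilde Y_{i-1}}\norm{z-p_i}^{-(d-1)}$ and one must exploit the rapid growth of $\norm{v_j}$ to make it effectively convergent—is the most delicate technical step.

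Property~(3) is then automatic from~(2) and~(4): since $\tilde Y$ is $M$-separated, any ball of radius $M/3$ contains at most one point of $\tilde Y$, while any ball of radius comparable to $M$ (and at least $\sqrt d$) contains $\Omega(M^d)$ lattice points, so $Y$ meets every ball of radius $O(M)$ and is relatively dense.
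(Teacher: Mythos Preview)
Your overall framework---enumerate $\Z^d$, attach to each lattice point $p_i$ a primitive outward ray $T_i=\{p_i+kv_i:k\ge1\}$, delete the union, and read off (1)--(3)---is exactly the paper's construction, and your arguments for (1), (2), intra-trail separation, and (3) are correct. The genuine gap is in your treatment of inter-trail $M$-separation for (4). You set up a counting/measure argument over all primitive $v_i$ in an annulus, acknowledge that in $d=2$ the naive union bound produces the divergent sum $\sum_{z\in\tilde Y_{i-1}}\norm{z-p_i}^{-1}$, and then defer the ``Diophantine bookkeeping'' without carrying it out. As written this is not a proof: the crossing-zone heuristic does not by itself control encounters between discrete trails whose base points may be close, and your claimed angular bound $\sum_j O(M/\norm{v_j})$ ignores the contribution of trail points far from the crossing.

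The paper sidesteps all of this by drastically restricting the family of allowed directions: it takes every $v_k$ of the fixed form $(m_k,1)$ (in $d=2$; the obvious analogue works for $d\ge2$). With this choice one has, for each already-built $Y_k$,
\[
\lim_{m\to\infty}\dist\big(Y_k,\ \{z_{K+1}+t(m,1):t\ge1\}\big)=\infty,
\]
because for large $m$ the intersection of the new line with the line carrying $Y_k$ occurs at parameter $t\to0$, so on $t\ge1$ the distance to that line is realised at $t=1$, where it is $\gtrsim m/\sqrt{1+m_k^2}\to\infty$. Hence one simply chooses $m_{K+1}$ large enough that $\dist(Y_k,Y_{K+1})>M$ for all $k\le K$; no counting, no measure estimates, no Diophantine analysis. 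If you replace your general primitive $v_i$ by this one-parameter family, your proof goes through immediately.
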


As an application to our approach we also prove a Ramsey type theorem, 
Theorem \ref{thm:multidim-Szemeredi-analog}, which is in the flavor of the multidimensional 
Szemer\'{e}di's theorem  (see \cite{FK, Zhao})  but is much easier to prove  (see Remark \ref{rem:ramsey}).

%\comyaar{I am not sure that it is really appropriate to compare our result with the multidimensional 
%	Szemer\'{e}di's theorem. It appears here, and in the paragraph after the next picture. What do you think?}\\
%\commichael{What do you think about my way to to address the above (your) question?}

Given a discrete set $Y$ we say that \emph{almost every $y\in Y$} satisfies (some) property $(P)$   if 

\begin{equation}\label{eq:almostP}
\lim_{r\to\infty}\frac{\#\{y\in Y\cap B(r,{\bf0})| \ y \ \text{ does not have property }(P)\}}{G_Y(r)}=0.
\end{equation}

\begin{definition}\label{def:eps-realize_tree}
Let $Y\subseteq\R^2$ be discrete set, $\eps>0$, and $\Gamma$ a tree\footnote{An undirected, acyclic, connected graph.} embedded in the plane with vertices $V=\{x_0,\ldots,x_m\}$. Given $y_0\in Y$, we say that \emph{$(\Gamma,x_0)$ can be $\varepsilon$-realized from $y_0$ in $Y$} if there exists a function $f\colon V\to Y$ such that $f(x_0)=y_0$ and for every edge $\{x_i,x_j\}$ of $\Gamma$ there is an integer $k_{ij}\geq1$ such that 
\[\norm{(f(x_i)-f(x_j)) - k_{ij}(x_i-x_j)}<\varepsilon.\]  
\end{definition}

%%%%%%%%%%%%%%%%%%%
\ignore{\commichael{ \ Alternative exposition\\

%By a tree $(V, E)$ we mean an undirected, acyclic, connected graph
%with finite sets  of vertices and edges\\ ($V$  and $E$, respectively).
%We assume that each edge is oriented in some way. That is,  if
%$v_1, v_2\in V$ are such that $(v_1,v_2)\in E$  then  $(v_2,v_1)\notin E$.

%\begin{definition}\label{def:eps-realize_tree2}
\noindent{\bf Definition ?} \qquad
Second definition:\\[-5mm]

Let $(V, E)$ be a tree. By an $\R^2$-setting of edges on  $(V, E)$
we mean (any) map  $\pi_E\colon E\to\R^2$. 
By a $\R^2$-setting of vertices on  $(V, E)$
we mean (any) map  $\pi_V\colon V\to\R^2$. 

Let  $Y\subseteq\R^2$ be a discrete subset. Let  
$(V, E)$  be a tree.
By $\varepsilon$-presentation of $(V, E)$ into  $Y$  
we mean a triple of maps:
\begin{enumerate}
\item  A map $\pi_E\colon E\to\R^2$;
\item  A map $\pi_V\colon V\to\R^2$;
\item  A map $\phi\colon E\to\N=\{1,2,3,\ldots\}$.
\end{enumerate}
such that,  for every
vertex  $e=(v_1,v_2)\in E$,   we have
\[
\norm{\pi_V(v_2)-\pi_V(v_1)-\phi(e)\pi_E(e)}<\varepsilon.
\]
%\end{definition}\\

(Version of Theorem 1.8, thm:multidim-Szemeredi-analog).\\
{\bf Theorem}  Let $\eps>0$, $Y\subseteq\R^2$ a uniformly separated set with $\lim_{T\to\infty}\limits \frac{T}{G_Y(T)}=0$.
Let  $v_0\in V$ be a vertex.  Assume that  an $\R^2$-setting of edges on  $(V, E)$ 
is given:  $\pi_E\colon E\to\R^2$.  Then, for almost every $y_0\in Y$,   there is a $\varepsilon$-presentation of $(V, E)$ into  $Y$  
with the above $\pi_E$  and some  $\pi_V\colon V\to\R^2$  and  $\phi\colon E\to\N$  so that  $\pi_V(v_0)=y_0$.
}}
%%%%%%%%%%%%%%%%%%%

\begin{thm}\label{thm:multidim-Szemeredi-analog} 
Let $\eps>0$, $Y\subseteq\R^2$ a uniformly separated set with $\lim_{T\to\infty}\limits \frac{T}{G_Y(T)}=0$, $\Gamma=(V, E)$ a finite tree embedded in $\R^2$, and $x_0\in V$.
Then for almost every $y_0\in Y$ (in the sense of \eqref{eq:almostP}), $(\Gamma,x_0)$ can be $\varepsilon$-realized from $y_0$ in $Y$.
\end{thm}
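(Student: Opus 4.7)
My plan is to induct on the rooted tree $(\Gamma,x_0)$, reducing to the single-edge case, which is the content of the following \emph{Key Lemma}: if $Y$ satisfies the hypotheses of the theorem, if $Y'\subseteq Y$ has density $1$ in $Y$ (in the sense of \eqref{eq:almostP}), and if $u\in\R^2\setminus\{{\bf 0}\}$, then for almost every $y\in Y$ there exist $y'\in Y'$ and $k\in\N$ with $\norm{y'-y-ku}<\varepsilon$. Granting the Key Lemma, the theorem follows by backward induction on the rooted tree: for each vertex $v$ of $\Gamma$, let $G_v$ be the set of $y\in Y$ from which the subtree rooted at $v$ can be $\varepsilon$-realized with $f(v)=y$. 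Leaves give $G_v=Y$ trivially. If $v$ has children $c_1,\dots,c_r$, then $y\in G_v$ iff for each $i$ there exist $y_i\in G_{c_i}$ and $k_i\geq 1$ with $\norm{y_i-y-k_i(c_i-v)}<\varepsilon$. By the induction hypothesis each $G_{c_i}$ has density $1$; applying the Key Lemma with $Y'=G_{c_i}$ and $u=c_i-v$ shows that the set of $y$ succeeding for one fixed $i$ has density $1$, and a finite intersection of density-$1$ subsets of $Y$ has density $1$. The theorem is the case $v=x_0$.

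To prove the Key Lemma, I write $\lambda=\norm{u}$, $e_1=u/\lambda$, choose $e_2\perp e_1$, and partition the plane into strips $S_j=\{z\in\R^2:\langle z,e_2\rangle\in[jw,(j+1)w)\}$ of width $w$ small enough that $w<\varepsilon/2$ and $w<\delta_0/2$, where $\delta_0$ denotes the separation constant of $Y$. Set $\delta=\sqrt{\varepsilon^2-w^2}$. A ``bad'' $y\in Y$ (one with no admissible $y'\in Y'$ anywhere) is in particular ``strip-bad'': no $y'\in Y'$ in the same strip $S_j$ as $y$ satisfies $|\langle y'-y,e_1\rangle-k\lambda|<\delta$ for any $k\geq 1$. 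Projecting $Y\cap S_j$ onto the $e_1$-axis gives a $\sigma$-separated subset with $\sigma=\sqrt{\delta_0^2-w^2}>2\delta$, which I then partition into $M=\lceil\lambda/\delta\rceil$ arc-groups according to fractional part modulo $\lambda$. Within an arc-group $G$, any two distinct elements differ by $k\lambda+r$ with $k\in\Z\setminus\{0\}$ and $|r|<\delta$; the larger of the two therefore gives a genuine match (with $k\geq 1$) for the smaller. Consequently the strip-bad elements of $G$ are exactly those whose $e_1$-projection exceeds that of every element of $Y'\cap G$, and so
\[
\#\bigl((\text{strip-bad})\cap G\cap B({\bf 0},R)\bigr)\;\leq\;|(G\setminus Y')\cap B({\bf 0},R)|+1.
\]

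Summing over the at most $M$ arc-groups in each of the $O(R/w)$ strips meeting $B({\bf 0},R)$ yields
\[
\#\bigl((\text{bad})\cap B({\bf 0},R)\bigr)\;\leq\;|(Y\setminus Y')\cap B({\bf 0},R)|+O(R),
\]
and dividing by $G_Y(R)$ both terms tend to zero: the first by the density-$1$ assumption on $Y'$, the second by the super-linear growth hypothesis $R/G_Y(R)\to 0$. This establishes the Key Lemma.

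The main obstacle is the Key Lemma, and within it the arc-group bookkeeping (especially handling the cases where $Y'\cap G$ is empty, finite with a maximum, or unbounded in the $e_1$-direction) and the geometric reduction ``bad implies strip-bad''. The decisive quantitative point is that the overhead from (number of strips)$\times$(arc-groups per strip) is only $O(R)$, which is exactly what the super-linear growth hypothesis $R/G_Y(R)\to 0$ is designed to absorb.
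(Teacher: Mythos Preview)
Your argument is correct and follows essentially the same route as the paper's: both reduce by induction on the tree to a single-edge lemma, proved by partitioning $Y$ into thin strips perpendicular to the edge direction and then into classes according to the fractional part of the projection along it, exploiting that the total number of cells meeting $B(\mathbf{0},R)$ is $O(R)$. The only differences are cosmetic---you fold the density-$1$ target set $Y'$ directly into your Key Lemma, whereas the paper states an absolute version (Lemma~\ref{lem:one_edge_approx}(B)) and simply applies it to $Y'$ during the induction---and one small oversight: your inequality $\sigma>2\delta$ tacitly assumes $\varepsilon$ is small relative to the separation constant $\delta_0$, which you should record as a harmless WLOG at the outset.
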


Figure \ref{F:Trees} illustrates the statement of the theorem.
\begin{figure}[ht!]
\begin{center}
\includegraphics{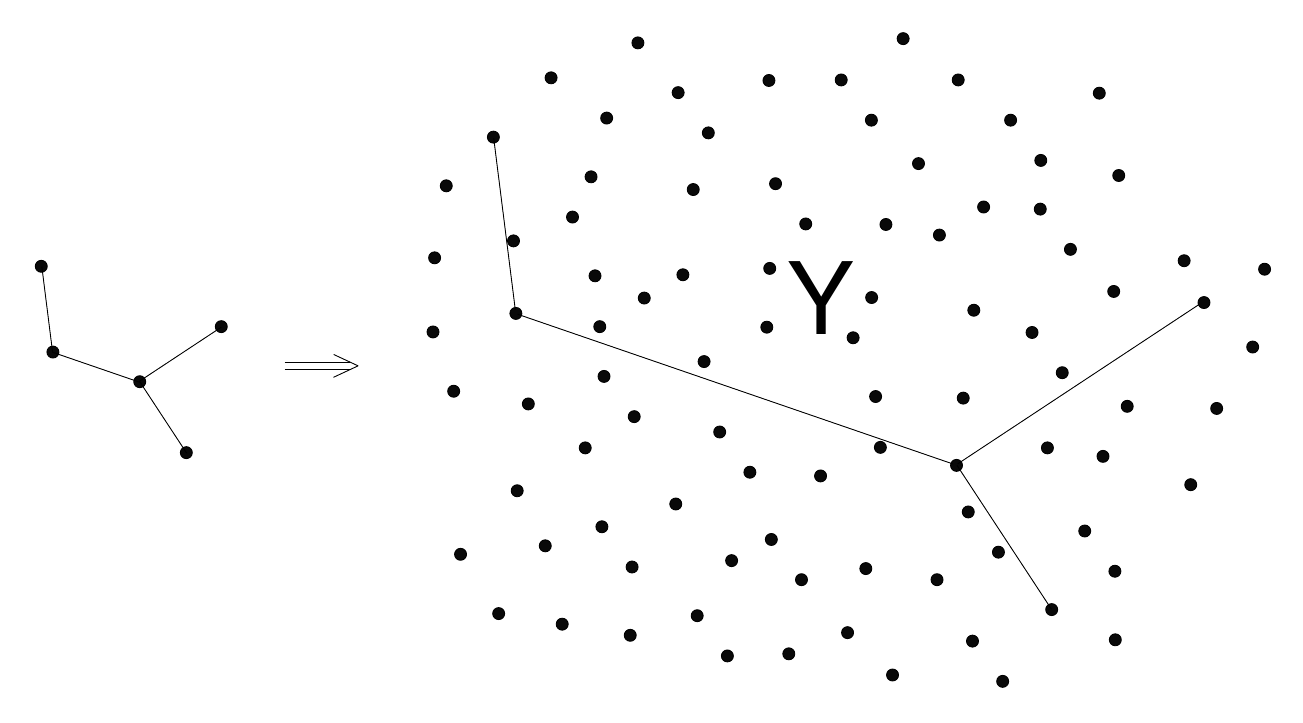}
\caption{Different edges may be stretched by different integer factors.}
\label{F:Trees}
\end{center}
\end{figure} 

\begin{remark}\label{rem:ramsey}
Observe that our assertion in Theorem \ref{thm:multidim-Szemeredi-analog} is weaker than the multidimensional Szemer\'{e}di's theorem in the sense that we allow different scalings for different edges of the tree, but here we assume no structure on $Y$ and hence our assumptions are much weaker as well. Other Ramsey type results of geometric nature can be found in \cite{D}.
\end{remark}

%%%%%%%%%%%%%%%%%%%%%
\subsection*{The structure of the paper} 
The proofs of Theorems \ref{thm:sublinear_growth_and_no_visibility}, \ref{thm:Every_point_is_visible},  and \ref{thm:multidim-Szemeredi-analog} are given in Sections \ref{sec:sublinear_growth_and_no_visibility}, \ref{sec:Every_point_is_visible} and \ref{sec:Realizing_Trees_in_Discrete_Sets} respectively. The proof of Theorem \ref{thm:small_growth_implies_full_visibility_2}
 is presented only in the special case of $d=2$ (Theorem \ref{thm:small_growth_implies_full_visibility_1} in 
 Section \ref{sec:full_vis}), the general case of $d\geq2$ is analoguous. Being more involved, the proof of Theorem \ref{thm:Michael's_conj} is only given in 
 Section \ref{sec:proof_of_Michael's_conj}. We conclude with open problems in Section \ref{sec:open_problems}.

%\section{Proof of Theorem \ref{thm:Every_point_is_visible}}\label{sec:Every_point_is_visible}

%%%%%%%%%%%%%%%%%%%%%
\ignore{
\subsection*{Acknowledgments} 
The authors would like to thanks Barak Weiss for useful discussions and for Lemma \ref{lem:compactness_argument_for_eps-hidden_pts}.\\
\comyaar{Lemma \ref{lem:compactness_argument_for_eps-hidden_pts} (with this acknowledgment) can be erased if we don't find any way to exploit it.}
}

%%%%%%%%%%%%%%%%%%%%%%%%%%%%%%%%%%%%%%%%%%%%%%%%%%%
\section{Proof of Theorem \ref{thm:sublinear_growth_and_no_visibility}}\label{sec:sublinear_growth_and_no_visibility}
Consider the set 
%%%%%%
\begin{subequations}\label{eq:y}
	\begin{equation}\label{eq:yshort}
	Y\df\{y_k\mid k\geq2\}\subseteq  \C, 
	\end{equation}
	%%%%%%
	where \vspace{-1mm}
	\begin{equation} \label{eq:ylong}
	y_k=r_ke^{i\phi_k}\in\C,\qquad \text{and} \qquad  \left\{
	\begin{array}{l} r_k\,=k\log k   \\
	\phi_k=\log^{1/2}(\log k) 
	\end{array}
	\right.
	\end{equation}
\end{subequations}
(with  $\R^2$ and $\C$ being identified). 

Theorem \ref{thm:sublinear_growth_and_no_visibility} is derived from the following
proposition.

%%%%%%%%%%%%%
\begin{prop}\label{prop:ex1Y}
	The set \ $Y$  in \eqref{eq:y}  provides an example for Theorem \ref{thm:sublinear_growth_and_no_visibility}.
	Thus $Y$  has no hidden points  (i.\,e., $\vis(Y)=\R^2$) and  
	\begin{equation}\label{eq:growthrate1}
	\lim_{r\to\infty}\frac{G_Y(r)\log r}r\,=1.
	\end{equation}
\end{prop}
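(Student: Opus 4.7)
The proposition makes two substantive claims about the spiral $Y=\{r_k e^{i\phi_k}:k\ge 2\}$ of \eqref{eq:y}: the growth asymptotic $G_Y(r)\log r/r\to 1$, and that $Y$ witnesses Theorem~\ref{thm:sublinear_growth_and_no_visibility}. The parenthetical ``$\vis(Y)=\R^2$'' in the proposition appears to be a typo for $\vis(Y)=\varnothing$ --- the conclusion the referenced theorem actually demands (``all points hidden'') --- and my plan is to establish this latter version; any proof of ``$\vis(Y)=\R^2$'' would in fact contradict Theorem~\ref{thm:sublinear_growth_and_no_visibility}.

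The growth-rate assertion is elementary. Since $(r_k)_{k\ge 2}$ with $r_k=k\log k$ is strictly increasing, $G_Y(r)=\#\{k\ge 2:k\log k<r\}$; inverting $k\log k=r$ by the bootstrap $k\sim r/\log k\sim r/\log r$ gives $G_Y(r)=r/\log r+o(r/\log r)$, whence $G_Y(r)\log r/r\to 1$. Uniform separation of $Y$ is immediate from the radial gap $r_{k+1}-r_k\sim\log k\to\infty$, plus a direct check of finitely many small $k$.

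For the visibility part, my plan is to exploit the spiral structure. Since $\phi_k=(\log\log k)^{1/2}$ is strictly increasing to $\infty$, the ``arms'' $A_n:=\{k:\phi_k\in[2\pi n,2\pi(n+1))\}$ partition the indices, and on each $A_n$ the angle $\phi_k\bmod 2\pi$ monotonically sweeps $[0,2\pi)$ exactly once. A direct differentiation gives $\phi'(k)=\tfrac{1}{2\phi_k\,r_k}$, so the \emph{physical} arc-length spacing between successive points of $A_n$ satisfies
\[
 r_k\,(\phi_{k+1}-\phi_k)\;\sim\;\frac{1}{2\phi_k}\;\le\;\frac{1}{4\pi n}\;\xrightarrow[n\to\infty]{}0.
\]
Now fix $x\in\R^2$, $v=e^{i\theta}\in\SS^1$, and $\varepsilon>0$, and set $c:=x\cdot v^\perp$. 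For $y=re^{i\phi}$ with $r\ge 2|x|$, the perpendicular distance from $y$ to the supporting line of $L_{x,v}$ equals $|r\sin(\phi-\theta)-c|$, whose vanishing set consists of a \emph{forward} branch $\phi\equiv\theta+\arcsin(c/r)\pmod{2\pi}$ (on which $y$ lies in the forward half-plane $(y-x)\cdot v>0$) and a backward branch $\phi\equiv\theta+\pi-\arcsin(c/r)$. Choose $n$ large enough that $1/(4\pi n)<\varepsilon$ and that every $k\in A_n$ satisfies $r_k\ge 2|x|$. Since $\phi_k\bmod 2\pi$ sweeps $[0,2\pi)$ continuously and monotonically across $A_n$ with consecutive physical jumps bounded by $1/(4\pi n)$, some $k\in A_n$ lands within that tolerance of the forward-branch angle, yielding $\dist(y_k,L_{x,v})<\varepsilon$. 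Letting $\varepsilon\to 0$ gives $\dist(L_{x,v},Y)=0$, so $x\notin\vis(Y,v)$; since $v$ and $x$ were arbitrary, $\vis(Y)=\varnothing$.

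The main technical nuisance is the forward-vs.-backward branch bookkeeping: one must hit the forward rather than the backward branch when sweeping $A_n$. Both branches lie in a single period $[0,2\pi)$, so each is hit exactly once by the monotone sweep of $\phi_k\bmod 2\pi$; for large $n$ the relevant $k$ automatically satisfies $r_k\ge 2|x|$, and then the forward-branch $y_k$ lies in $\{z:(z-x)\cdot v>0\}$ by the angular bound $|\phi_k-\theta|\le\pi/6$ implied by $|c/r_k|\le 1/2$. This is the only place where one pays attention beyond the core arc-length estimate above.
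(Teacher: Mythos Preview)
Your approach is correct, and you rightly flag the typo: the paper means $\vis(Y)=\varnothing$ (every point hidden), and its own proof establishes exactly that by showing $\dist(L_{x,v},Y)=0$ for every ray.

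Your route to that conclusion differs from the paper's. You partition the indices into full-rotation ``arms'' $A_n=\{k:\phi_k\in[2\pi n,2\pi(n+1))\}$ and use the monotone sweep of $\phi_k\bmod 2\pi$, together with the arc-length bound $r_k(\phi_{k+1}-\phi_k)\sim 1/(2\phi_k)\le 1/(4\pi n)$, to place some $y_k$ within $\varepsilon$ of the forward branch of the given line. The paper instead observes that the piecewise-linear spiral $\bigcup_k\overline{y_k y_{k+1}}$ must cross every ray $L_{x,v}$ infinitely often; whenever the segment $\overline{y_k y_{k+1}}$ meets the ray, the ray direction lies inside the angle $\angle\, y_k\,x\,y_{k+1}$, giving $\dist(y_k,L_{x,v})\le |y_k-x|\sin\phi'_{k,x}$, and the paper then bounds $\sin\phi'_{k,x}$ via the area of the triangle $\triangle(x,y_k,y_{k+1})$ through a determinant expansion, arriving at the same $O\big((\log\log k)^{-1/2}\big)$ decay. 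Both arguments ultimately rest on the identical derivative estimate $\phi'(k)=1/(2\phi_k r_k)$; the paper's segment-crossing device neatly sidesteps the forward/backward bookkeeping you single out as the main nuisance, while your direct use of the distance formula $|r\sin(\phi-\theta)-c|$ is more elementary and avoids the triangle-area detour.

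One point worth tightening: the forward-branch angle $\theta+\arcsin(c/r_k)$ is a \emph{moving} target, and within a single arm $A_n$ the radii $r_k$ span an enormous range, so one cannot simply freeze the target and invoke the sweep. The clean fix is to run a discrete intermediate-value argument on $H(k)=\phi_k-\theta-\arcsin(c/r_k)-2\pi n$: for large $n$ its increments are positive with $r_k\bigl(H(k{+}1)-H(k)\bigr)\le 1/(4\pi n)+O(1/k)$, so at the sign-change index you obtain exactly the physical tolerance you claim. This is implicit in your ``lands within that tolerance'' step but deserves one explicit sentence.
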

%%%%%%%%%%%%%
\begin{proof}%[Proof of Proposition \ref{prop:ex1Y}] 
	The growth rate \eqref{eq:growthrate1}  of  $Y$   is easily validated. 
	In order to prove that  $\vis(Y)=\R^2$,  we have to show
	that the distance between  the set $Y$  and any 
	ray  $L_{x,v}=\{x+tv\mid t\in[0,\infty)\}\subseteq\R^2$  is  $0$:  
	\begin{equation}\label{eq:distLY}
	\dist(L_{x,v},Y)=0\qquad  (\forall x\in \R^2, \ \forall v\in \SS^1).
	\end{equation}
	(see \eqref{eq:dist}  for the definition of  the distance function $\dist$).
	
	Fix $x\in \R^2$ and $v\in \SS^1$.
	Since the union  
	$U=\bigcup_{k\geq3}\limits\overline{y_k,y_{k+1}}$  \ 
	of the segments  $\overline{y_k,y_{k+1}}$ \
	forms an  expanding spiral in $\R^2$ (spinning counterclockwise), the set 
	\begin{equation}\label{eq:K}
	K=K(L_{x,v}):=\left\{k\geq 3 \mid  \overline{y_k,y_{k+1}} \cap L_{x,v}\neq\varnothing\right\}
	\end{equation}
	must be infinite.  We shall prove that in fact
	\begin{equation}\label{eq:distLyk}
	\lim_{{k\to\infty}\atop{k\in K}} \dist(L_{x,v}, y_k)=0.
	\end{equation}
	This would imply \eqref{eq:distLY}  and complete the proof of Proposition \ref{prop:ex1Y}.
	
	Observe the following three estimates (see \eqref{eq:ylong}):
	\begin{subequations}
		\begin{align}
		\phi'_k\df\phi_{k+1}-\phi_k= &  \ O\Big(\tfrac1{k\,\cdot\,\log k\,\cdot\,\log^{1/2}(\log k)}\Big), \label{eq:phipkO}\\ 
		r'_k\df\,r_{k+1}-r_k=& \  O(\log k) \label{eq:rpkO}  \\
		y_{k+1}-y_k=& \ O(\log k).  \label{eq:ypkO}
		\end{align}
	\end{subequations}
	The  first two, \eqref{eq:phipkO} and \eqref{eq:rpkO}, 
	are straightforward, and the third one easily follows:
	\begin{align*}
	|y_{k+1}-y_k|&=|r_{k+1}e^{i\phi_{k+1}}-r_ke^{i\phi_k}|\leq |r_{k+1}e^{i\phi_{k+1}}
	-r_ke^{i\phi_{k+1}}|\\ 
	&+r_k|e^{i\phi_{k+1}}-e^{i\phi_k}|= |r'_k|+r_k\,|e^{i\phi'_k}-1|=O(\log k)\\
	&+O(r_k\cdot\phi'_k)=O(\log k)+O\big(\tfrac1{\log^{1/2}(\log k)}\big)=O(\log k).
	\end{align*}
	
	For  $x\in\R^2$, denote by   $\phi'_{k,x}\in[0,\pi]$   the angle between the vectors \
	\mbox{$\overrightarrow{xy_k}=y_k-x$} and  $\overrightarrow{xy_{k+1}}=y_{k+1}-x$ (neither vector vanishes
	for a fixed $x$ and large $k$).  
	
	Note that in view of  \eqref{eq:ylong} and
	\eqref{eq:phipkO},  we have 
	\begin{equation}\label{ineq:phip0}
	\phi'_{k,{\bold 0}}=\phi_{k+1}-\phi_k=O\big(\tfrac1{k\,\cdot\,\log k\,\cdot\,\log^{1/2}(\log k)}\big)
	\end{equation}
	where ${\bold 0}=(0,0)$  stands for the origin in $\R^2$.
	
	Denote by  $S_{k,x}$ the area of the triangle $\triangle(x, y_k, y_{k+1})$, with vertices $x, y_k, y_{k+1}$.  
	Then
	%%%%%<
	\begin{equation}\label{eq:skx}
	S_{k,x}=\tfrac12\,|y_k-x|\cdot|y_{k+1}-x|\cdot  \sin\phi'_{k,x}
	\end{equation}
	%%%%%>
	and hence,  in view of \eqref{eq:ylong} and \eqref{ineq:phip0},  
	%%%%%%%%<
	\begin{align}
	S_{k,{\bold0}}&=\tfrac12\,|y_k|\cdot|y_{k+1}|\cdot  \sin\phi'_{k,\bold0}\label{eq:sk0}\\
	&=O(k^2\cdot\log^2 k\cdot\phi'_{k,\bold0})=
	O\big(\tfrac{k\,\cdot\,\log k}{\log^{1/2}(\log k)}\big).\notag
	\end{align}
	%%%%%%%%>
	
	Denote by  $\Re(z), \Im(z)\in\R$  its real and imaginary parts of $z\in\C$. 
	
	Since $x\in\C=\R^2$  is fixed,  the numbers $a=\Re(x)$, $b=\Im(x)$ are also fixed. Then  
	\[
	S_{k,x}= \frac{1}{2}\left|
	\det\!\begin{pmatrix}
	\Re(y_k)-a & \Re(y_{k+1}-y_k)\\
	\Im(y_k)-b & \Im(y_{k+1}-y_k)
	\end{pmatrix} 
	\right|
	\leq \big|S_{k,x}^{(1)}\big|+\big|S_{k,x}^{(2)}\big|
	\]
	where  \ $S_{k,x}^{(1)}= \frac{1}{2}\det\!
	\begin{pmatrix}
	\Re(y_k) & \Re(y_{k+1}-y_k)\\
	\Im(y_k) & \Im(y_{k+1}-y_k)
	\end{pmatrix} $
	\ and \ 
	$S_{k,x}^{(2)}=\det\!
	\begin{pmatrix}
	a & \Re(y_{k+1}-y_k)\\
	b & \Im(y_{k+1}-y_k)
	\end{pmatrix}.
	$
	
	In view of \eqref{eq:sk0}   and  \eqref{eq:ypkO},   we have \ 
	$\big|S_{k,x}^{(1)}\big|=S_{k,{\bold 0}}=O\!\left(\tfrac{k\,\cdot\,\log k}{\log^{1/2}(\log k)}\right)$ and  $\big|S_{k,x}^{(2)}\big|=O(|y_{k+1}-y_k|)=O(\log k)$
	(as  $x$  is fixed). Thus 
	\[
	S_{k,x}\leq \big|S_{k,x}^{(1)}\big|+\big|S_{k,x}^{(2)}\big|=
	O\big(\tfrac{k\,\cdot\,\log k}{\log^{1/2}(\log k)}\big).
	\]
	
	Since  $|y_k-x|^{-1}=O(k^{-1}\log^{-1}k)$,  it follows from \eqref{eq:skx} that
	\begin{equation}\label{est:sin}
	\phantom{W}\sin\phi'_{k,x}=O\big(|y_k-x|^{-1}\,|y_{k+1}-x|^{-1}\cdot S_{k,x}\big)=
	O\Big(\tfrac{1}{k\cdot\log k\,\cdot\,\log^{1/2}(\log k)}\Big).
	\end{equation}
	
	Now assume that  $k\in K$.  Then  the ray  $L_{x,v}$ intersects the segment  
	$[y_k,y_{k+1}]$. 
	Let $\psi_{k,x}$  be the angle 
	between the ray  $\overrightarrow{x,y_{k}}$ and the ray  $L_{x,v}$.
	This angle forms  a part of the angle between the vectors 
	\mbox{$\overrightarrow{xy_k}$} and  $\overrightarrow{xy_{k+1}}$, hence
	\[
	0\leq\psi_{k,x}\leq\phi'_{k,x}<\pi/2\qquad  (k\in K).
	\]
	Taking into account the estimate \eqref{est:sin},  we obtain
	\begin{align*}
	&\dist(L_{x,v},Y)\leq\dist(L_{x,v},y_k)=|y_k|\sin\psi_{k,x}\leq |y_k|\sin\phi'_{k,x}\\
	&\qquad=(k\cdot\log k)\cdot O\Big(\tfrac{1}{k\,\cdot\,\log k\,\cdot\,\log^{1/2}(\log k)}\Big)
	=O\big(\tfrac{1}{\log^{1/2}(\log k)}\big)  \quad   (k\in K).
	\end{align*}
	
	This proves \eqref{eq:distLY}  and completes the proof of Proposition \ref{prop:ex1Y}
	(and hence of Theorem~\ref{thm:sublinear_growth_and_no_visibility}).
\end{proof}

%%%%%%%%%%%%%%%%%%%%%%%%%%%%%%%%%%%%%%%%%%%%%%%%%%%
\section{Proof of Theorem \ref{thm:small_growth_implies_full_visibility_2}}\label{sec:full_vis}
%%%%%%%%%%%%%%%%%%%%%%

The proof of Theorem \ref{thm:small_growth_implies_full_visibility_2}  is provided only for the case
of  $d=2$  (presented by Theorem~\ref{thm:small_growth_implies_full_visibility_1} below).
The general case of $d\geq2$  in Theorem \ref{thm:small_growth_implies_full_visibility_2} 
is handled in a similar way.

\begin{thm}\label{thm:small_growth_implies_full_visibility_1}
Let  $Y\subseteq\R^2$  be a discrete set. Then the following implications:  (1)\,$\Rightarrow$(2)\,$\Rightarrow$(3) \
hold where
\begin{enumerate}
\item $G_{Y}(r)<\frac r{\log^{1+\varepsilon} r}$  \
(for some $\varepsilon>0$ and all large $r$); \\[-1mm]
\item  $\sum_{y\in Y\setminus\{{\bf0}\}}\limits\frac1{\norm{y}}<\infty$;\\
\item For every  $x\in\R^2$,  the relation  $x\in\vis(Y,v)$  holds for Lebesgue almost all  $v\in\SS^1$.
In particular,   $\vis(Y)=\R^2$.
\end{enumerate}
\end{thm}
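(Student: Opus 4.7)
My plan is to prove each implication separately by elementary means. The implication (1)$\Rightarrow$(2) is a standard partial-summation / dyadic-annulus estimate converting the growth bound on $G_Y$ into convergence of a weighted sum; (2)$\Rightarrow$(3) converts this convergence, via a one-line planar geometry estimate, into a Lebesgue-measure-zero bound on the set of ``bad'' directions from any fixed base point $x$.

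For (1)$\Rightarrow$(2) I would apply Abel summation,
\[
\sum_{y \in Y,\ 1 \le \|y\| < R} \frac{1}{\|y\|} \;=\; \frac{G_Y(R)}{R} - G_Y(1) + \int_1^R \frac{G_Y(r)}{r^2}\, dr,
\]
and observe that hypothesis (1) makes both the boundary term $G_Y(R)/R \le \log^{-(1+\varepsilon)}R$ and the improper integral $\int_1^\infty \frac{dr}{r\log^{1+\varepsilon}r}$ finite, while the finitely many $y\in Y$ with $\|y\|<1$ contribute a bounded constant by discreteness. A dyadic decomposition into shells $2^n \le \|y\| < 2^{n+1}$ works equally well, producing a tail comparable to $\sum_n n^{-(1+\varepsilon)}$.

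For (2)$\Rightarrow$(3) I fix $x \in \R^2$ and first upgrade (2) to $\sum_{y \in Y \setminus \{x\}} \|y-x\|^{-1} < \infty$ using $\|y-x\| \ge \|y\|/2$ for $\|y\| \ge 2\|x\|$ and absorbing the remaining finitely many terms into a constant. For $\delta > 0$ and $y \in Y \setminus \{x\}$ with $\|y - x\| \ge \delta$, the set
\[
A_{y,\delta} \,:=\, \{v \in \SS^1 \mid \dist(y, L_{x,v}) < \delta\}
\]
is an arc of angular length at most $2\arcsin(\delta/\|y-x\|) \le \pi \delta/\|y-x\|$ centered at the direction $(y-x)/\|y-x\|$: the only delicate point is to distinguish the forward half-space $\{v : \langle v, y-x\rangle \ge 0\}$, where $\dist(y,L_{x,v}) = \|y-x\|\sin\theta$ with $\theta$ the angle between $v$ and $y-x$, from the backward half-space, where the closest point of $L_{x,v}$ to $y$ is $x$ itself, at distance $\|y-x\|\ge\delta$, so no bad directions arise. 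Summing over $y$ (and dropping the finitely many $y$ with $\|y - x\| < \delta$, which disappear once $\delta$ is small enough by discreteness),
\[
\bigl|\{v \in \SS^1 \mid \dist(L_{x,v}, Y \setminus \{x\}) < \delta\}\bigr| \;\le\; \pi\delta \sum_{y \in Y \setminus \{x\}} \|y - x\|^{-1} \;=\; C_x\,\delta.
\]
Since the set of $v \in \SS^1$ for which $x \notin \vis(Y,v)$ is contained in the intersection of these sets over $\delta = 1/n$, it has Lebesgue measure zero, giving (3).

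The only nonroutine step in the whole argument is the arc-length computation for the single-$y$ ``bad'' set, which requires the forward/backward split just described; everything else is bookkeeping on summability already familiar from the treatment of Proposition \ref{prop:ex1Y} earlier in the paper.
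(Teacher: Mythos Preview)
Your proof is correct and follows essentially the same approach as the paper: a dyadic-shell/summation estimate for (1)$\Rightarrow$(2) (the paper uses precisely the dyadic version you mention as an alternative, obtaining a tail comparable to $\sum_k k^{-(1+\varepsilon)}$) and the $2\arcsin(\delta/\|y-x\|)\le \pi\delta/\|y-x\|$ arc-length bound summed over $y$ for (2)$\Rightarrow$(3). The only cosmetic differences are that the paper first treats the case $x={\bf 0}$ and then translates, and does not spell out the forward/backward-ray split that you (correctly) highlight.
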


The proof of Theorem \ref{thm:small_growth_implies_full_visibility_1} is partitioned into two parts.
The implications (1)$\Rightarrow$(2)  and  (2)$\Rightarrow$(3)  are established by Propositions \ref{prop:sum_inv1}
and \ref{prop:sum_inv2}, respectively.
%%%%%%%%%%%%%
%%%%%%%%%%%%%

%%%%%%%%%%%%%
\begin{prop}\label{prop:sum_inv1}
Let  $Y\subseteq\R^2$  be a discrete subset such that \ $G_Y(r)<\frac r {\log^{1+\eps}r}$,   for some $\eps>0$  and all large $r$.  Then  $\sum_{y\in Y\setminus\{{\bf 0}\}}\limits\frac1{\norm{y}}<\infty$.
\end{prop}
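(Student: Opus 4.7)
My plan is to reduce the sum to a manageable form by organizing points of $Y$ according to their distance from the origin, either dyadically or via Abel summation, and then directly plugging in the hypothesis on $G_Y(r)$.

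First, I would discard the (finitely many) points $y \in Y$ with $\|y\| < R_0$ for a suitably large threshold $R_0$ beyond which the growth bound $G_Y(r) < r/\log^{1+\varepsilon} r$ is valid; these contribute only finitely many terms to the sum and can be ignored. Next, partition the remaining points into dyadic annuli
\[
A_n = \{y \in Y : 2^n \le \|y\| < 2^{n+1}\}, \qquad n \ge n_0,
\]
where $2^{n_0} \ge R_0$. Then
\[
\sum_{y \in Y,\, \|y\| \ge 2^{n_0}} \frac{1}{\|y\|} \;\le\; \sum_{n \ge n_0} \frac{\#A_n}{2^n} \;\le\; \sum_{n \ge n_0} \frac{G_Y(2^{n+1})}{2^n}.
\]

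Now the hypothesis gives
\[
\frac{G_Y(2^{n+1})}{2^n} \;<\; \frac{2^{n+1}}{2^n \log^{1+\varepsilon}(2^{n+1})} \;=\; \frac{2}{(n+1)^{1+\varepsilon}(\log 2)^{1+\varepsilon}},
\]
and $\sum_n (n+1)^{-1-\varepsilon}$ converges. Adding back the finitely many omitted terms yields the desired conclusion $\sum_{y \in Y \setminus \{\mathbf{0}\}} \|y\|^{-1} < \infty$.

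There is really no serious obstacle: the argument is a routine layer-cake / Abel summation estimate, and the only thing to be slightly careful about is the initial truncation to apply the growth hypothesis only where it holds. An alternative, essentially equivalent, presentation would use Stieltjes integration, writing $\sum 1/\|y\| = \int_{R_0}^{\infty} r^{-1} \, dG_Y(r)$ and integrating by parts to obtain $\int_{R_0}^{\infty} G_Y(r)/r^2 \, dr$, whose integrand is bounded by $1/(r \log^{1+\varepsilon} r)$, a classical convergent integral. Either route delivers the proposition in a few lines.
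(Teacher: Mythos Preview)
Your proof is correct and follows essentially the same approach as the paper: a dyadic decomposition into annuli $\{2^k < \|y\| \le 2^{k+1}\}$, bounding each annular sum by $G_Y(2^{k+1})/2^k$, and invoking the hypothesis to get a convergent series $\sum k^{-1-\varepsilon}$. The only cosmetic differences are notation and that you are slightly more explicit about the initial truncation.
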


%%%%%%%%%%%%%
%%%%%%%%%%%%%
\begin{proof}%[Proof of Proposition \ref{prop:sum_inv1}]
For  $k\geq1$, denote \  $Y_k=\{y\in Y\!\mid \norm{y}\leq 2^k\}$  and
\[
 Z_k=Y_{k+1}\!\setminus\! Y_{k}=
\left\{y\in Y\,\big|\, 2^k<\norm{y}\leq 2^{k+1}\right\}.
\]
Then, for large $k$, we have
\[
\sum_{y\in Z_k}\limits \frac1{\norm{y}}\leq |Z_k|\cdot 2^{-k}\leq |Y_{k+1}|\cdot 2^{-k}\leq  \dfrac{2^{k+1}}{\log^{1+\eps}(2^{k+1})}\cdot 2^{-k}
=\frac {O(1)}{k^{1+\eps}}.
\]

It follows that   
\[
\sum_{y\in Y\setminus\{{\bf 0}\}}\limits \frac1{\norm{y}}\leq\!\! \sum_{y\in Y_1\setminus\{{\bf 0}\}}\limits \frac1{\norm{y}}+
\sum_{k\geq1}\limits\left(\sum_{y\in Z_k}\limits \frac1{\norm{y}}\right)<\infty,
\]
completing the proof of Proposition \ref{prop:sum_inv1}.
%The general case of \eqref{eq:sum_inv1} (for arbirary fixed $a\in\R^2$)  follows easily since
%$\norm{y-a}>\frac12\norm{y}$,  for all but a finitely many  $y\in Y$.
 \end{proof}
%%%%%%%%%%%%%

%%%%%%%%%%%%% 
\begin{lem}\label{lem:sum_inv2}
Let  $Y\subseteq\R^2$  be a discrete subset such that  $\sum_{y\in Y\setminus\{{\bf 0}\}}\limits\frac1{\norm{y}}<\infty$  \ holds.  
Then, for Lebesgue almost all directions  $v\in \SS^1\subseteq\R^2$,  we have
${\bf 0}\in\vis(Y,v)$.  In particular,  ${\bf 0}\in\vis(Y)$.
\end{lem}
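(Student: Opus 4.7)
The plan is to show that the set of "bad" directions
\[
B \df \{v \in \SS^1 : \mathbf{0} \notin \vis(Y,v)\} = \{v \in \SS^1 : \dist(L_{\mathbf{0},v}, Y \setminus \{\mathbf{0}\}) = 0\}
\]
has Lebesgue measure zero. Writing $B_\eps \df \{v \in \SS^1 : \exists\, y \in Y \setminus \{\mathbf{0}\},\ \dist(L_{\mathbf{0},v}, y) < \eps\}$, we have $B \subseteq B_\eps$ for every $\eps > 0$, so it suffices to prove that $|B_\eps| \to 0$ as $\eps \to 0^+$.

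The key geometric input is a single-point estimate: for fixed $y \in Y \setminus \{\mathbf{0}\}$ with $\norm{y} > \eps$, let $\theta \in [0,\pi]$ denote the angle between $v$ and $y$. Elementary planar geometry shows $\dist(L_{\mathbf{0},v}, y) < \eps$ precisely when $\theta < \arcsin(\eps/\norm{y})$; the complementary regime $\theta > \pi/2$ gives closest point equal to the origin and distance $\norm{y} > \eps$, hence is irrelevant. The set of directions $v \in \SS^1$ meeting this angular condition is an arc of length at most $2\arcsin(\eps/\norm{y}) \leq \pi \eps/\norm{y}$.

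Because $Y$ is discrete, $r_0 \df \dist(\mathbf{0}, Y \setminus \{\mathbf{0}\}) > 0$, so for every $\eps < r_0$ every $y \in Y \setminus \{\mathbf{0}\}$ satisfies $\norm{y} > \eps$. A union bound then gives
\[
|B_\eps| \;\leq\; \sum_{y \in Y \setminus \{\mathbf{0}\}} \frac{\pi \eps}{\norm{y}} \;=\; \pi \eps \cdot \Bigl(\sum_{y \in Y \setminus \{\mathbf{0}\}} \tfrac{1}{\norm{y}}\Bigr).
\]
By hypothesis the sum in parentheses is finite, hence $|B_\eps| = O(\eps)$, and letting $\eps \to 0^+$ yields $|B| = 0$, which proves that $\mathbf{0} \in \vis(Y,v)$ for Lebesgue-almost every $v \in \SS^1$.

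No serious obstacle is anticipated; the argument is a classical Borel--Cantelli style covering by short arcs. The one mildly delicate point is distinguishing between a ray and a full line — a point $y$ that lies "behind" the ray contributes no directions at distance $<\eps$ once $\eps < r_0$, so the ray/line distinction is harmless after restricting to sufficiently small $\eps$.
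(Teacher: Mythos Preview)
Your proof is correct and follows essentially the same route as the paper's: define the set of bad directions, cover it for each $\eps>0$ by the union over $y\in Y\setminus\{\mathbf 0\}$ of arcs of length $2\arcsin(\eps/\norm{y})<\pi\eps/\norm{y}$, apply the union bound, and let $\eps\to0$. Your explicit remark on the ray-versus-line distinction is a nice clarification the paper leaves implicit.
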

%%%%%%%%%%%%%
%%%%%%%%%%%%% 
\begin{proof}[Proof of Lemma \ref{lem:sum_inv2}]%[\bf Proof of Lemma \ref{lem:sum_inv2}]
Recall that  
$L_{{\bf0},v}=\{vt\mid t\in[0,\infty)\}\subseteq\R^2$  
stands for the ray emanating from the origin in direction $v\in \SS^1$. 

Let  $Y'=Y\!\setminus\!\{\bf 0\}$ and $\eps>0$. Set
\begin{align*}\label{eq:defDY}
D_Y(\eps)&\df\{v\in\SS^1\mid \dist(L_{{\bf0},v}, Y')<\eps\}
\\&= \bigcup_{y\in Y'}\limits\{v\in\SS^1\!\mid\! \dist(L_{{\bf0},v}, y)<\eps\}.
\end{align*}
%(where $\dist$  is  the distance function defined in  \eqref{eq:dist}). 
Then
%%%%%%%
\begin{equation}\label{eq:sum11}
\lambda(D_Y(\eps))\leq\sum_{y\in Y'}\lambda(\{v\in\SS^1\mid \dist(L_{{\bf0},v}, y)<\eps\})
\end{equation}
where $\lambda$  stands for  the Lebesgue measure on the unit circle $\SS^1\!\subseteq\R^2$, \mbox{$\lambda(\SS^1)\!=2\pi$}.

Now assume that  $0<\eps<\min_{y\in Y'}\limits\,\norm{y}$.  Then the inequalities  \ $0<\eps<\norm{y}$ \ hold
for every  $y\in Y'$,  and one  verifies that
\[
\lambda(\{v\in\SS^1\mid \dist(L_{{\bf0},v}, y)<\eps\})= 2\arcsin{\tfrac\eps{\norm{y}}} < \tfrac{\pi\eps}{\norm{y}},
\]
for every $y\in Y'$ (the inequality $2\arcsin t<\pi t$, for  $0<t<1$, is used).
 
By substituting the last inequality into \eqref{eq:sum11}, we derive that
$\lambda(D_Y(\eps))\leq \pi\eps\, c,$
where $c=c(Y)=\sum_{y\in Y'}\limits\,\dfrac1{\norm{y}}<\infty$.

Next consider the set  $D_Y=\{v\in\SS^1\mid \dist(L_{{\bf0},v}, Y')=0\}$.  Since  
\[
D_Y\subseteq\{v\in\SS^1\mid \dist(L_{{\bf0},v}, Y')<\eps\}=D_Y(\eps),
\]
and since  $\lim_{\eps\to0+}\limits\lambda(D_Y(\eps))=0$,  we conclude that  $\lambda(D_Y)=0$,
and hence  
\[
\lambda(\SS^1\!\setminus\! D_Y)=\lambda\{v\in\SS^1\mid \dist(L_{{\bf0},v}, Y')>0\}=1.
\]
This completes the proof of Lemma \ref{lem:sum_inv2}.
\end{proof}
%\smallskip
%%%%%%%%%%%%%
%%%%%%%%%%%%% 

\begin{prop}\label{prop:sum_inv2}
Let  $Y\subseteq\R^2$  be a discrete subset and let $x\in \R^2$  be an arbitrary point.  Assume that
$\sum_{y\in Y\!\setminus\!\{{\bf 0}\}}\limits\frac1{\norm{y}}<\infty$  \ holds.  
Then, for Lebesgue almost all directions  \mbox{$v\in \SS^1$},  we have
$x\in\vis(Y,v)$.  
\end{prop}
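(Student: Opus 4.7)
The idea is to reduce to the already-established special case $x=\mathbf{0}$ handled in Lemma \ref{lem:sum_inv2} by translating. Set $\tilde Y \df \{y-x \mid y\in Y\}$, which is again a discrete subset of $\R^2$. For every direction $v\in\SS^1$,
\[
\dist\bigl(L_{x,v},\,Y\setminus\{x\}\bigr)=\dist\bigl(L_{\mathbf0,v},\,\tilde Y\setminus\{\mathbf0\}\bigr),
\]
so $x\in\vis(Y,v)$ if and only if $\mathbf0\in\vis(\tilde Y,v)$. Hence it suffices to prove that Lemma \ref{lem:sum_inv2} applies to $\tilde Y$, i.e. that
\[
\sum_{\tilde y\in\tilde Y\setminus\{\mathbf0\}}\frac1{\norm{\tilde y}}=\sum_{y\in Y,\,y\neq x}\frac1{\norm{y-x}}<\infty.
\]

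The first step will be to verify this summability transfers from $Y$ to $\tilde Y$. Split the sum at radius $R\df 2\norm{x}+1$. For $y\in Y$ with $\norm{y}\geq R$ we have $\norm{y-x}\geq \norm{y}-\norm{x}\geq \tfrac12\norm{y}$, so
\[
\sum_{y\in Y,\,\norm{y}\geq R}\frac1{\norm{y-x}}\leq 2\sum_{y\in Y,\,\norm{y}\geq R}\frac1{\norm{y}}\leq 2\sum_{y\in Y\setminus\{\mathbf0\}}\frac1{\norm{y}}<\infty
\]
by hypothesis. The remaining sum is over the set $\{y\in Y\mid \norm{y}<R,\,y\neq x\}$, which is finite because $Y$ is discrete and $y\neq x$ keeps every term well defined; so this tail is a finite sum of finite quantities.

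The second and final step is to apply Lemma \ref{lem:sum_inv2} to the translated set $\tilde Y$. The lemma gives that $\mathbf0\in\vis(\tilde Y,v)$ for Lebesgue almost every $v\in\SS^1$, and by the translation identity above this is exactly the statement $x\in\vis(Y,v)$ for almost every $v$. In particular $x\in\vis(Y)$, and since $x$ was arbitrary, the proof is complete. There is no genuine obstacle here: the whole argument is a straightforward translation reduction, and the only thing to check carefully is the elementary comparison $\norm{y-x}\asymp\norm{y}$ far from $x$, which makes the summability condition invariant under translation.
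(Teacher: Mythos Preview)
Your proof is correct and follows exactly the same strategy as the paper: translate by $-x$, verify the summability condition is preserved under translation, and invoke Lemma~\ref{lem:sum_inv2}. The paper's version simply asserts the implication $\sum_{y\in Y\setminus\{\mathbf0\}}\tfrac1{\norm y}<\infty\Rightarrow\sum_{z\in Z\setminus\{\mathbf0\}}\tfrac1{\norm z}<\infty$ without spelling out the comparison argument you give, so your write-up is in fact a bit more detailed.
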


%%%%%%%%%%%%%
%%%%%%%%%%%%% 
\begin{proof}%[Proof of Proposition \ref{prop:sum_inv2}]
Let  $Z=Y-x=\{y-x\mid y\in Y\}$.  Observe the implication
\[
\sum_{y\in Y\setminus\{{\bf 0}\}}\limits\tfrac1{\norm{y}}<\infty \ \implies  \sum_{z\in Z\setminus\{{\bf 0}\}}\limits\tfrac1{\norm{z}}<\infty.
\]
By Lemma \ref{lem:sum_inv2}, for Lebesgue almost all directions  $v\in \SS^1$,  
we have  ${\bf 0}\in\vis(Z,v)$;  hence  $x\in\vis(Z+x,v)$.  Since  $Y=Z+x$,  we get $x\in\vis(Y,v)$.
\end{proof}
%%%%%%%%%%%%%%%%%%%%%%%%%%%%%%%%%%%%%%%%%%%%%%%%%%%

%%%%%%%%%%%%%%%%%%%%%%%%%%%%%%%%%%%%%%%%%%%%%%%%%%%
\section{Proof of Theorem \ref{thm:Every_point_is_visible}} \label{sec:Every_point_is_visible}

\ignore{
\commichael{ Do we need Lemma \ref{lem:ball_argument} below?}
We need the following lemma which is a straightforward consequence of 
Lemma~\ref{lem:one_edge_approx} (part (A))  presented in Section \ref{sec:Realizing_Trees_in_Discrete_Sets}.

%%%%%%%%%%%%%%%%%%%%%
\begin{lem}\label{lem:ball_argument}
Let $Y\subseteq\R^2$ be a discrete set that satisfies $\lim_{r\to\infty}\limits \frac{r}{G_Y(r)}=0$. Then, for every $\varepsilon>0$ and every $v\in\mathbb{S}^1$, almost every $y\in Y$ (in the sense of \eqref{eq:almostP}) satisfies $y\notin\vis(Y,v,\eps)$. 
\end{lem}

%%%%%%%%%%%%%%%%%%%%%

It follows that, under the conditions of Lemma \ref{lem:ball_argument},  for every $\varepsilon>0$ 
and every $n$-tuple of directions $v_1,\ldots,v_n\in\mathbb{S}^1$, almost every $y\in Y$ satisfies
\[
y\notin\bigcup_{k=1}^n\limits \vis(Y,v_k,\eps).
\]

\commichael{ Do we need Lemma \ref{lem:ball_argument} above?}

On the other hand, it is not difficult to construct a uniformly separated and relatively dense
set \mbox{$Y\subseteq\R^d$}, $d\ge 2$, such that every $y\in Y$ is visible, and even from the same direction. For instance, let $\{z_n\}$ be an enumeration of\, $\Z^d$, and let  $\delta_n$ be a sequence of distinct numbers in $(0,1/2)$  such that 
$\lim_{n\to\infty} \delta_n=0$, 
then one easily verifies that every $y\in Y\df\{z_n+(\delta_n,0,\ldots,0)\}$ is visible from any direction perpendicular
to $(1,0,\ldots,0)$. 
(Note that in this example  one can show that $\vis(Y) = \R^d\!\setminus\!\Z^d$). 

\vspace{1mm}\hrule\vspace{1mm}
}

In Theorem \ref{thm:Every_point_is_visible} we construct a large (density 1 and relatively dense) subset  $Y\subseteq\Z^d$  
with no hidden points for $Y$.

%%%%%%%%%%%%%%%%%%%%%
\begin{proof}[Proof of Theorem \ref{thm:Every_point_is_visible}]
For simplicity, the construction  is presented only for dimension \mbox{$d=2$}. The same idea works for general $d\ge 2$. 

%%%%%%%%%%%%%
%%%%%%%%%%%%%
%%%%%%%%%%%%%
\subsection*{Outline of construction}
We start with arbitrary ordering of the set  $\Z^2$  in a sequence $(z_k)_{k\geq1}$.

Then, we inductively construct an increasing sequence   $(m_k)_{k\geq1}$ of positive integers   
(the details are below, following  \eqref{eq:zzk}).  

Given $z_k$ and $m_k$,  the vectors $v_k\in \Z^2$  and the sets  $Y_k\subseteq\Z^2$ 
are determined  as follows: \vspace{-3mm}
%%%%%%%%%%%
\begin{align}
v_k&\df(m_k,1)\in\Z^2; \label{eq:vk} \\ 
Y_k&\df\{z_k+nv_k\mid n\geq1\}\subseteq\Z^2.\label{eq:Yk} 
\end{align}
%%%%%%%%%%%

Finally,  we define set $Y$ by setting
%%%%%%%%%%%
\begin{equation}\label{eq:Y}
\tilde Y\df\bigcup_{k\geq1}\nolimits Y_k;  \quad               Y\df\Z^2\setminus\!\tilde Y.
\end{equation}
%%%%%%%%%%%

We claim that every point  $z\in\R^2$  is visible for $Y$,  i.\,e.\  condition (1) of 
Theorem \ref{thm:Every_point_is_visible}  is satisfied  (regardless of the choice of  
integers $m_k$).

Indeed, if $z\notin \Z^2$, the claim is obvious  (then  $z$  must be visible in either a horizontal 
or a vertical direction).  
Otherwise  $z=z_k$  for some $k\geq1$,  and, since $Y\subseteq \Z^2\!\setminus\!Y_k$, we get 
\begin{equation}\label{eq:zzk}
z=z_k\in\vis(\Z^2\!\setminus\!Y_k, v_k)\subseteq\vis(\Z^2\!\setminus\!Y_k)\subseteq\vis(Y).
\end{equation}

%%%%%%%%%%%%%%%%%%%%%%

\begin{figure}[ht!]
	\begin{center}
		\includegraphics{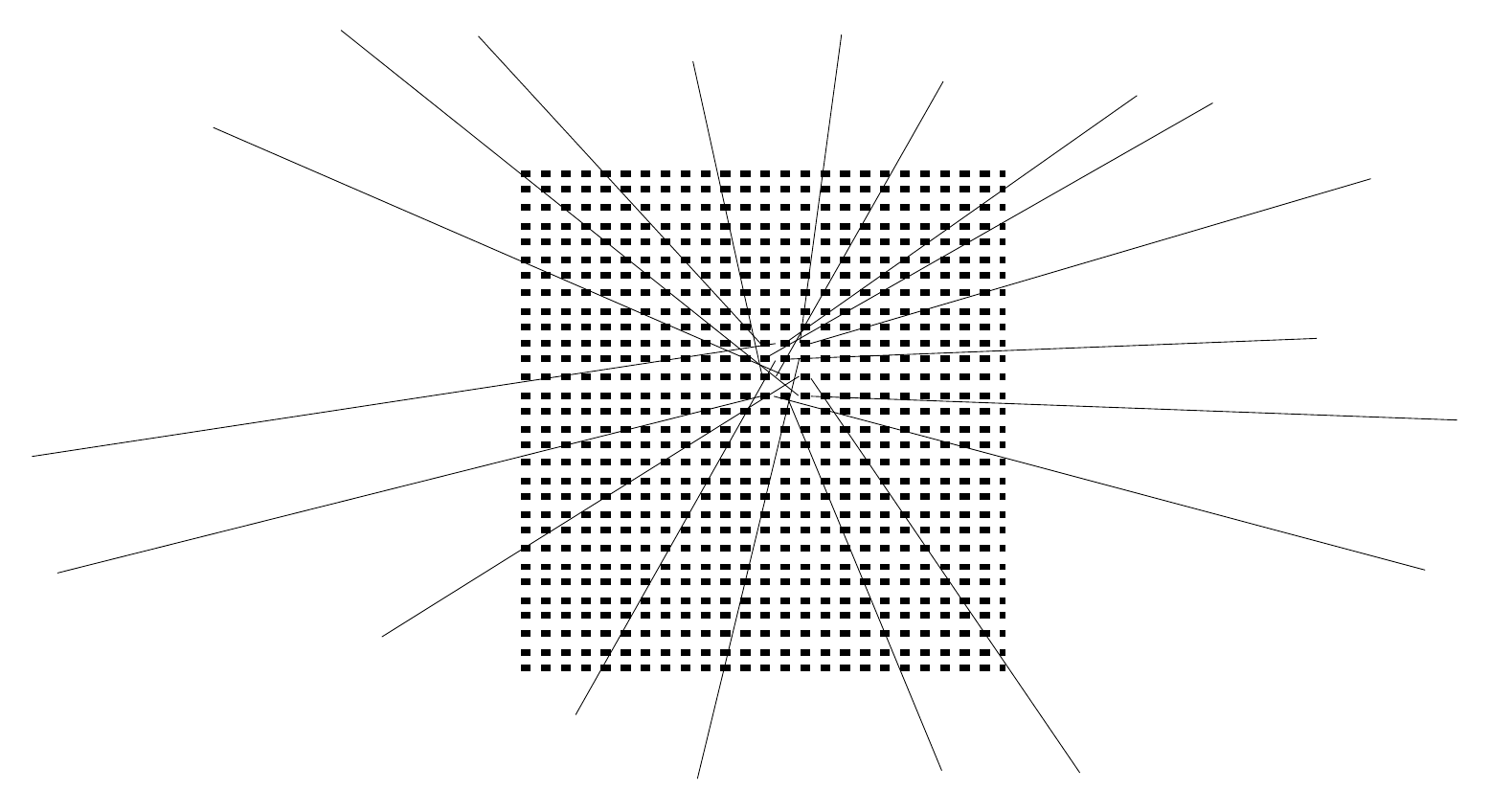}
		\caption{For each point $z_k$, a line of vision for $z_k$ is created by removing all the integer points on a particular ray, which is initiated in $z_k$.}
		\label{F:lines}
	\end{center}
\end{figure}

%%%%%%%%%%%%%%%%%%%%%%%%%%%%%%%%%%%%%%%%%%%%
\subsection*{\bf Construction of a sequence ($m_k$)} We describe an inductive procedure for
selecting integers  $m_k$  to assure that the conditions (2), (3) and (4) of the theorem are met.  

One selects  an integer  $m_1>\max\{M, 4/\varepsilon, 2\!\norm{z_1}\}$  and proceeds by induction.

Assume that a strictly increasing $K$ terms long sequence of numbers  $(m_k)_{k=1}^{K}$  
has been already selected,  $K\!\geq1$.
Then the vectors  $v_k$ and the sets $Y_k$  are deter\-mined by  \eqref{eq:vk} and \eqref{eq:Yk}.
One easily verifies that for each $k=1, \ldots, K$
%%%%%%%%
\begin{equation}\label{eq:distinctYs0}
\lim_{m\to+\infty}  \dist(Y_k, \{z_{k+1}+t(m,1)\mid t\in[1,\infty)\}=\infty
\end{equation}
%%%%%%%%
(where $\dist$  is  the distance function defined in  \eqref{eq:dist}).

We select  $m_{K+1}$   large enough to satisfy the  inequalities
\begin{subequations}
\begin{align}
m_{K+1}>\max\{2^{K+1}\!/\varepsilon, 2\!\norm{z_{K+1}}, m_{K}\}\label{eq:mkineq}\\
\intertext{and}
 \dist(Y_k, Y_{K+1})>M \quad  (1\leq k\leq K) \label{eq:distinctYs}
\end{align}
\end{subequations}
where  
$Y_{K+1}=\{z_{K+1}+n(m_{K+1},1)\mid n\geq1\}
$
is set in accordance with \eqref{eq:Yk} and \eqref{eq:vk}
(note that the inequality \eqref{eq:distinctYs}  can be achieved because of \eqref{eq:distinctYs0}).

This completes the inductive construction of the sequence $(m_k)$.

%%%%%%%%%%%%%
%%%%%%%%%%%%%
%%%%%%%%%%%%%
\subsection*{Validation of condition (2)} 
Note that  \ $\norm{v_k}=\norm{(m_k,1)}>m_k>2\norm{z_k}$
for all $k\geq1$
 (see \eqref{eq:vk} and \eqref{eq:mkineq}).  It follows that, for all $n,k\in\N$,
%%%%%%%
\[
\norm{nv_k+z_k}\geq n\norm{v_k}-\norm{z_k} 
> (n-\tfrac12)\norm{v_k}\geq \tfrac n2\norm{v_k}> \tfrac {nm_k}2.  
\]
%%%%%%%

For any $k\geq1$,  in view of the definition of  $Y_k$  (see \eqref{eq:Yk}),  we obtain
%%%%%%%%%%%%%
\begin{align*}
G_{Y_k}(r)&=\#\{n\geq1\mid \norm{nv_k+z_k}< r\}\leq\\
&\leq \#\{n\geq1\mid \tfrac{nm_k}2< r\}<\tfrac{2r}{m_k}< 2r\varepsilon 2^{-(k+1)}=r\varepsilon 2^{-k},
\end{align*}
%%%%%%%%%%%%%
and, since  $\tilde Y=\bigcup_k Y_k$  (see \eqref{eq:Y}),   we conclude that
\[
G_{\tilde Y}(r)\leq \sum_{k\geq1} G_{Y_k}(r)<r\varepsilon\sum_{k\geq1} 2^{-k}=r\varepsilon,
\]
validating condition (2) of Theorem \ref{thm:Every_point_is_visible}.

%%%%%%%%%%%%%
%%%%%%%%%%%%%
%%%%%%%%%%%%%

\subsection*{\bf Validation of conditions (3) and (4)}  
To validate condition (4), we have to establish the implication \
$
(y_1,y_2\in\tilde Y, \ y_1\neq y_2)\implies  \dist(y_1,y_2)\geq M.
$

Since $y_1,y_2\in\tilde Y=\bigcup_k Y_k$, there are  $k_1,k_2\in \N$  such 
that  $y_1\in Y_{k_1}$, $y_2\in Y_{k_2}$.

If $k_1=k_2$, set  $k=k_1$;   then  $y_1,y_2\in\tilde Y_k=\{z_k+nv_k\mid n\geq1\}$,  and
(since $y_1\neq y_2$)  we obtain $\dist(y_1,y_2)\geq \norm{v_k}>m_k>M$.

And if $k_1\neq k_2$,  we may assume that  $k_1> k_2$,  and then
$
\dist(y_1,y_2)\geq\dist(Y_{k_1}, Y_{k_2})>M
$
in view the construction of $(m_k)$  (see \eqref{eq:distinctYs}). 

This validates condition (4). 
 
 In order to validate condition (3),  we show that every ball  $B$  of radius~$\sqrt2$ contains a point in $Y$.
(This claim holds only for $d=2$;  for $d\geq3$,  the required radius could be taken $\sqrt{d}$).

Let  $B=B(z,\sqrt2)$  where  \mbox{$z=(a,b)\in\R^2$}.  Then both points
$y_1=\left(\floor{a},\floor{b}\right)$ and $y_2=\left(\floor{a}+1,\floor{b}\right)$
lie in  $B\,\cap\,\Z^2$.  

Since $\norm{y_1-y_2}=1<M$,  we have $y_i\notin\tilde Y$
for at least one $i\in\{1,2\}$  (due to the already established condition (4)). 
But then $y_i\in Y=\Z^2\!\setminus\!\tilde Y$  
(see \eqref{eq:Y}), and hence $y_i\in B\cap Y$;  thus $B\cap Y\neq\emptyset$.
The proof of Theorem \ref{thm:Every_point_is_visible} is complete.\\
\end{proof}

%%%%%%%%%%%%%%%%%%%%%%%%%%%%%%%%%%%%%%%%%%%%%%%%%%%
\section{Proof of Theorem \ref{thm:multidim-Szemeredi-analog}}\label{sec:Realizing_Trees_in_Discrete_Sets}
We begin with the following lemma which is the key for the proof of 
Theorem~\ref{thm:multidim-Szemeredi-analog}.

%%%%%%%%%%%%%%%%%%%%%%
\begin{lem}\label{lem:one_edge_approx}
Let $Y\subseteq\R^2$ be a discrete set such that  $\lim_{r\to\infty}\limits \frac{r}{G_Y(r)}=0$. 
Let  a non-zero vector ${\bf0}\neq v\in\R^2$ and an $\varepsilon>0$ be given.
Then:
\begin{itemize}
\item[(A)]  For almost every $z\in Y$  (in the sense of \eqref{eq:almostP}), 
one can find a point $w\in Y\!\setminus\!\{z\}$ and
 an integer $k\geq0$  such that
\begin{equation}\label{eq:one_edge_approx}
\norm{(z-w) - kv}<\varepsilon.
\end{equation} 

\item[(B)] (Under the additional assumption that  \ $Y$  is uniformly separated). \\ 
\mbox{} \hspace{1mm}  Given an integer $M\geq1$,  then, for almost every $z\in Y$,  one can find 
 a point $w\in Y\!\setminus\!\{z\}$ and an
integer  $k\geq M$ such that  \eqref{eq:one_edge_approx}  holds.
\end{itemize}
\end{lem}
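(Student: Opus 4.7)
The plan for (A) is to reformulate the condition on "bad" points as a separation property on a cylinder. Call $z\in Y$ \emph{bad} if no pair $(w,k)$ with $w\in Y\setminus\{z\}$ and integer $k\ge 0$ satisfies $\norm{(z-w)-kv}<\varepsilon$. The crucial observation is that any two distinct bad points $z_1,z_2\in Y$ must satisfy $\norm{(z_1-z_2)-kv}\ge\varepsilon$ for every $k\in\Z$: the badness of $z_1$, applied with witness $w=z_2$, excludes $k\ge 0$, while the badness of $z_2$ with witness $w=z_1$ excludes $k\le 0$. In the quotient cylinder $C=\R^2/\Z v$ (of circumference $\norm{v}$), this says that bad points of $Y$ project to an $\varepsilon$-separated subset of $C$.

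To finish (A), I would work in an orthonormal frame with one axis parallel to $v$ and observe that the image of $B({\bf0},R)$ in $C$ lies in a strip of dimensions at most $2R\times\norm{v}$; a routine packing estimate then bounds the number of $\varepsilon$-separated projections by $O(R\norm{v}/\varepsilon^2)=O_{v,\varepsilon}(R)$. Two distinct bad points with the same projection would satisfy $\norm{(z_1-z_2)-kv}=0<\varepsilon$ for some $k$, contradicting cylinder separation, so the projections of bad points are distinct. Hence the number of bad points in $Y\cap B({\bf0},R)$ is $O(R)$, and the hypothesis $r/G_Y(r)\to 0$ makes this $o(G_Y(R))$, which is exactly \eqref{eq:almostP}.

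For (B), I would reduce to (A) by rescaling the direction vector. Letting $\delta>0$ be the uniform separation of $Y$, set $\varepsilon':=\min(\varepsilon,\delta/2)$ and apply part (A) to the nonzero vector $Mv$ with tolerance $\varepsilon'$. For almost every $z\in Y$ this yields some $w\in Y\setminus\{z\}$ and integer $k'\ge 0$ with $\norm{(z-w)-k'(Mv)}<\varepsilon'$; the case $k'=0$ would give $\norm{z-w}<\varepsilon'\le\delta/2$, contradicting $\delta$-separation of $Y$, so $k'\ge 1$ and $k:=Mk'\ge M$ witnesses \eqref{eq:one_edge_approx}.

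The substantive step is the cylindrical reformulation in (A): once "bad" is recast as a symmetric relation that forces $\varepsilon$-separation in $\R^2/\Z v$, the conclusion follows from standard packing played against the super-linear growth hypothesis. I expect the only delicate point to be keeping boundary corrections in the packing count explicit enough to guarantee the $O(R)$ bound, but this is quantitative bookkeeping rather than a conceptual issue; part (B) then drops out essentially for free from (A) via the rescaling trick.
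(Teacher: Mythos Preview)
Your argument for (A) is correct and is essentially the paper's proof in more conceptual clothing: where you quotient by $\Z v$ and invoke a packing bound on the cylinder, the paper works in coordinates with $v=(1,0)$, partitions $Y\cap B({\bf 0},r)$ into boxes according to the pair $\big(\pi_2(y),\{\pi_1(y)\}\big)$ with mesh $\sim\varepsilon/2$, and shows each box contains at most one bad point (using exactly your symmetry observation to force $k\ge0$ for the point with larger $\pi_1$). Both routes produce the same $O(r)$ bound on bad points by the same pigeonhole mechanism.

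Your reduction for (B) is correct and genuinely slicker than the paper's argument. The paper does not reduce to (A); it reruns the box-partition argument directly for the set $Y'_M$ of $M$-bad points, using uniform $\delta$-separation to show that the $\pi_1$-coordinates within any box are $\delta/2$-separated, so that more than $\lceil 2M/\delta\rceil$ points in a single box would force two with $\pi_1$-gap exceeding $M$, producing a witness with $k\ge M$. Your rescaling $v\mapsto Mv$ combined with the observation that $\delta$-separation (with $\varepsilon'\le\delta/2$) rules out the $k'=0$ witness collapses (B) to a single invocation of (A), avoiding this second combinatorial pass entirely. The trade-off is that the paper's direct argument makes the dependence of the implicit constant on $M$ and $\delta$ explicit, whereas your reduction hides it inside the $O_{\,Mv,\varepsilon'}(R)$ coming from (A); for the purposes of the lemma as stated, your route is preferable.
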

%%%%%%%%%%%%%%%%%%%%%%

Note that the assumption $v\neq{\bf0}$ in the above lemma is necessary.  (Indeed, take e.\,g.  $Y=\Z^2$ and  $\varepsilon=1/2$).

%%%%%%%%%%%
%%%%%%%%%%%
%%%%%%%%%%%
\begin{proof}[Proof of (A) in Lemma \ref{lem:one_edge_approx}]
Without loss of generality we may assume that $v=(1,0)$  and that  $\varepsilon<1$.  

Fix  $r>1$.  Divide the half-closed interval  $[-r,r)$ into 
\mbox{$N_1=\ceil{4 r/\varepsilon}$} half-closed subintervals $S_i\ (1\leq i\leq N_1)$
of equal length $d_1=\frac{2r}{N_1}\leq\eps/2$.

Divide the half-closed interval $[0,1)$ into $N_2=\ceil{2/\varepsilon}$ half-closed subintervals $I_j\ (1\leq j\leq N_2)$
of equal length  $d_2=\frac1{N_2}\leq\varepsilon/2$.

For  $z=(z_1, z_2)\in\R^2$,  denote by $\pi_j(z)=z_j\in\R$, $j=1,2$,  the coordinates 
of~$z$. For  $1\leq i\leq N_1$ and $1\leq j\leq N_2$, set  
\begin{equation}\label{eq:Yijr}
Y_{i,j}(r)\df\{y\in Y\cap B({\bf 0},r)\mid  \pi_2(y)\in S_i \ \text{ and } \  \{\pi_1(y)\}\in I_j\}
\end{equation}
where   $\{\pi_1(y)\}\in[0,1)$  stands for the fractional part of $\pi_1(y)$.

Next we prove the following implication:
\begin{align}\label{eq:implication}
z,w\in Y_{i,j}(r) \ &\implies \  \norm{(z-w) - kv}<\varepsilon, \\
&\text{\ where } \  k=\floor{\pi_1(z)-\pi_1(w)+1/2}\in\Z. \notag
\end{align}

Indeed, if $z,w\in Y_{i,j}(r)$  for some $i, j$ ($1\leq i\leq N_1$, $1\leq j\leq N_2$),  then
%%%%%
\begin{equation}\label{eq:dif:pi1}
|\{\pi_1(z)\}-\{\pi_1(w)\}|<|I_j|=d_2\leq \varepsilon/2
\end{equation}
%%%%%
and hence (since $\varepsilon<1$)
\[
|(\pi_1(z)-\pi_1(w))-k|<\varepsilon/2<1/2
\]
where  
\[
k=\floor{\pi_1(z)-\pi_1(w)+1/2}
\]
stands for the closest integer to  $\pi_1(z)-\pi_1(w)$.
We also have  
\begin{equation}\label{eq:dif:pi2}
|\pi_2(z)-\pi_2(w)|<|S_i|=d_1\leq \varepsilon/2,
\end{equation}
and hence 
\[
\norm{(z-w) - kv}=\norm{\big(\pi_1(z)-\pi_1(w)-k,\ \pi_2(z)-\pi_2(w)\big)}<
\sqrt2\cdot\varepsilon/2<\varepsilon,
\]
completing the proof of the implication \eqref{eq:implication}.

Denote by  $Y'$   the set of  $z\in Y$   such that for every  $w\in Y\!\setminus\!\{z\}$
and every integer  $k\geq0$  the inequality
\begin{equation}\label{eq:one_edge_approx_opp}
\norm{(z-w) - kv}\geq\varepsilon
\end{equation} 
holds  (cf. \eqref{eq:one_edge_approx}).
We claim that 
\begin{equation}\label{eq:singleton}
\#(Y_{i,j}(r)\cap Y')\leq1,  \ \text{ for all }  1\leq i\leq N_1, 1\leq j\leq N_2.
\end{equation}
%(That is,  no set $Y_{i,j}(r)\cap Y'$  contains more than one element).

Indeed, assume to the contrary that   $z,w\in Y_{i,j}(r)\cap Y'$ (for some  $i, j$), with $z\neq w$.
Then, in view of \eqref{eq:implication}, we have  
\[
\norm{(z-w) - kv}<\varepsilon,  \quad  \text{ with } \ k=\floor{\pi_1(z)-\pi_1(w)+1/2}.
\]
Assuming that  $\pi_1(z)\geq \pi_1(w)$  (otherwise renaming $z$ and $w$), we obtain
$k\geq0$.  This contradicts the assumption that  $z\in Y'$  (see \eqref{eq:one_edge_approx_opp}).

Since  $B({\bf0},r)\cap Y'=\bigcup_{i,j}  (Y_{i,j}(r)\cap Y')$,  we conclude, in view of \eqref{eq:singleton},  that  
\[
\#(B({\bf0},r)\cap Y')\leq \#((i,j))=N_1N_2=\ceil{4 r/\varepsilon}\cdot\ceil{2/\varepsilon},
\]
and hence \
$
\limsup_{r\to\infty}\limits\frac{\#(B({\bf0},r)\cap Y')}r\leq \ceil{4/\varepsilon}\cdot\ceil{2/\varepsilon}.
$
Finally, since  $\lim_{r\to\infty}\limits \frac{r}{G_Y(r)}=0$,  we derive
$
\lim_{r\to\infty}\limits\tfrac{\#(B({\bf0},r)\cap Y')}{G_Y(r)}=0,
$
completing the proof of (A) in Lemma \ref{lem:one_edge_approx}.
\end{proof}

\begin{proof}[Proof of (B) in Lemma \ref{lem:one_edge_approx}]
As in the proof of (A),  without loss of generality we assume that  $v=(0,1)$.  Since  $Y$  is uniformly separated,
there exists a  $\delta>0$  such that $\norm{y_1-y_2}\geq \delta$,  for all distinct  $y_1,y_2\in Y$.

We assume (as we may)  that  $\varepsilon<\delta<1$.

Fix  $r>1$.  Define  the integers $N_1, N_2$, the intervals $S_i, I_j$,  the numbers $d_1,  d_2$  
and  the sets $Y_{i,j}(r)$  (for $1\leq i\leq N_1, 1\leq j\leq N_2$)  just as in the proof of (A)  (see \eqref{eq:Yijr}).
We claim that 
\begin{equation}\label{eq:pi1sep}
|\pi_1(y_1)-\pi_1(y_2)|>\delta/2  \quad  \text{ (for distinct } y_1,y_2\in  Y_{i,j}(r)).
\end{equation}

Indeed, we have
\[
 (\pi_1(y_1)-\pi_1(y_2))^2+(\pi_2(y_1)-\pi_2(y_2))^2=\norm{y_2-y_1}^2>\delta^2
\]
and,  since  $\varepsilon<\delta$ and $|\pi_2(y_1)-\pi_2(y_2)|<\varepsilon/2$ \ (see  \eqref{eq:dif:pi2}),
we get
\[
 (\pi_1(y_1)-\pi_1(y_2))^2>\delta^2-\varepsilon^2/4>\delta^2-\delta^2/4>\delta^2/4,
\]
and \eqref{eq:pi1sep}  follows.

Denote by  $Y'_M$  the set of $z\in Y$  such that for every  $w\in Y\!\setminus\!\{z\}$ and every
$k\geq M$  the inequality   
\begin{equation}\label{eq:one_edge_approx_opp2}
\norm{(z-w) - kv}\geq\varepsilon
\end{equation} 
 holds.  
 
 Let  $N=\ceil{2M/\delta\,}$.  We claim that 
\begin{equation}\label{eq:boundset}
\#(Y_{i,j}(r)\cap Y'_M)\leq N,  \ \text{ for all }  1\leq i\leq N_1, 1\leq j\leq N_2.
\end{equation}
That is, no set $Y_{i,j}(r)\cap Y'_M$  contains more than  $N$  elements.

Assume to the contrary that, for some choice of  $i,j$,  we have  $N+1$  distinct elements
$y_1, y_2, \ldots, y_{N+1}$  lying in the same set $Y_{i,j}(r)\cap Y'_M$. We may assume
that these $N+1$ elements are arranged in such a way  that  $\pi_1(y_{p+1})-\pi_1(y_p)>\delta/2$,
for all  $p=1,2,\ldots, N$  (see \eqref{eq:pi1sep}). Then 
\[
\pi_1(y_{N+1})-\pi_1(y_1)>N\cdot\delta/2\geq M.
\]
In view of \eqref{eq:implication},  we obtain \ $\norm{(y_{N+1}- y_1)-kv}<\varepsilon$  \ where
\[
k=\floor{\pi_1(y_{N+1})-\pi_1(y_1)+1/2}\geq\floor{M+1/2}= M.
\]
This contradicts the assumption that  $y_{N+1}\in Y'_M$,
completing the proof of \eqref{eq:one_edge_approx_opp2}.

Since  $B({\bf0},r)\cap Y'_M=\bigcup_{i,j}\limits  (Y_{i,j}(r)\cap Y'_M)$, we conclude, in view of \eqref{eq:boundset},  that  
\[
\#(B({\bf0},r)\cap Y'_M)\leq M\cdot\#((i,j))=MN_1N_2=M\ceil{4 r/\varepsilon}\cdot\ceil{2/\varepsilon},
\]
and hence 
$
\limsup_{r\to\infty}\limits\frac{\#(B({\bf0},r)\cap Y'_M)}r\leq \frac{4M}\varepsilon\ceil{\frac2\varepsilon}.
$
\ Since  $\lim_{r\to\infty}\limits \frac{r}{G_Y(r)}=0$,  we derive
$
\lim_{r\to\infty}\limits\tfrac{\#(B({\bf0},r)\cap Y'_M)}{G_Y(r)}=0,
$
completing the proof of (B) in Lemma \ref{lem:one_edge_approx}.
\end{proof}
%%%%%%%%%%%
%%%%%%%%%%%
%%%%%%%%%%%
%%%%%%%%%%%%%%%%%%%%%%
%\begin{lem}\label{lem:one_edge_approx2}
%Let $Y\!\subseteq\R^2$ be a discrete uniformly separated set such that
%$
%\lim_{r\to\infty}\limits \frac{r}{G_Y(r)}=0.
%$
%Let  a non-zero vector ${\bf0}\neq v\in\R^2$ and an $\varepsilon>0$ be given.
%Then, for almost every $z\in Y$  (in the sense of \eqref{eq:almostP}), 
%there are an integer $k\geq1$ and a point $w\in Y\!\setminus\!\{z\}$,  such that
%\begin{equation}\label{eq:one_edge_approx}
%\norm{(z-w) - kv}<\varepsilon.
%\end{equation} 
%\end{lem}
%%%%%%%%%%%%%%%%%%%%%%%
%
%\begin{proof}[Proof of Lemma \ref{lem:one_edge_approx2}]
%We proceed as the proof of Lemma \ref{lem:one_edge_approx2}
%\end{proof}
%%%%%%%%%%%%%%%%%%%%%%
%%%%%%%%%%%%%%%%%%%%%%%
%\comyaar{I am a little worried since I did not use the uniform separation of $Y$, which is clearly essential!?}

%%%%%%%%%%%%%%%%%%%%%%
We completed the proofs of both parts of Lemma \ref{lem:one_edge_approx}.  Part (B) of this lemma, with $M=1$, is used 
in the proof of Theorem \ref{thm:multidim-Szemeredi-analog}. 

\begin{proof}[\bf Proof of Theorem \ref{thm:multidim-Szemeredi-analog}]
The proof is by induction. Assume the assertion for every tree with less than $m+1$ vertices, and let $\Gamma$ be a tree with $m+1$ vertices $V=\{x_0,\ldots, x_m\}$, embedded in the plane. Let $\Gamma'\df\Gamma\minus\{x_0\}$ (the graph obtained from $\Gamma$ by removing $x_0$ and all its adjacent edges). Let $c$ be the number of connected components of $\Gamma'$, then $\Gamma'$ is a disjoint union of $c$ trees $\Gamma_1,\ldots,\Gamma_c$, each with less than $m+1$ vertices. Denote by $x_{i_j}\in\Gamma_j$ the unique neighbor of $x_0$ in $\Gamma_j$, then for every $j\in\{1,\ldots,c\}$, by the induction hypothesis, for almost every $y\in Y$, $(\Gamma_j,x_{i_j})$ can be $\varepsilon$-realized from $y$ in $Y$. 

Let $Y_j\subseteq Y$ be the set of points $y\in Y$ for which $(\Gamma_j,x_{i_j})$ cannot be \mbox{$\varepsilon$-realized} from $y$. Then $Y'\df Y\minus(Y_1\cup\ldots\cup Y_c)$ still satisfies $\lim_{r\to\infty}\limits \tfrac{r}{G_{Y'}(r)}=0$, and, for every $y\in Y'$ and every $j\in\{1,\ldots,c\}$, the planar tree $(\Gamma_j,x_{i_j})$ can be $\varepsilon$-realized from $y$ in $Y$. For each $j$ consider the edge $\{x_0,x_{i_j}\}$ of $\Gamma$. By Lemma~\ref{lem:one_edge_approx} (part B), for almost every $y\in Y'$ there exists a positive integer $k_{i_j}$ and a point $z_{i_j}\in Y'$ such that
\[\norm{(y-z_{i_j}) - k_{i_j}(x_0-x_{i_j})}<\varepsilon.\]
Hence for almost every $y\in Y'$ there exist positive integers $k_{i_1}\ldots,k_{i_c}$ and points $z_{i_1},\ldots,z_{i_c}\in Y'$ such that for every $j\in\{1,\ldots,c\}$ we have 
\[\norm{(y-z_{i_j})-k_{i_j}(x_0-x_{i_j})}<\varepsilon,\]
and the assertion follows.  
\end{proof}
%%%%%%%%%%%%%%%%%%%%%%%%%%%%%%%%%%%%%%%%%%%%%%%%%%%

%%%%%%%%%%%%%%%%%%%%%%%%%%%%%%%%%%%%%%%%%%%%%%%%%%%
\section{Proof of Theorem \ref{thm:Michael's_conj}}\label{sec:proof_of_Michael's_conj}

Theorem \ref{thm:Michael's_conj} generalizes the main result of Dumitrescu and Jiang from their paper \cite{DJ}. Our proof is also a generalization of theirs. In \S\ref{subsec:proof_of_Micael's_conj} we repeat the main steps of the proof of Dumitrescu and Jiang, using similar terminology and parallel lemmas, adapted to our settings, and prove Theorem \ref{thm:Michael's_conj}. 

Note that since some of the parameters that are used in the proof are very large, and some are very small, some of our figures are drawn with wrong proportions.   

\subsection{Proof outline}
For every $z\in Y$ let $C_z=\partial B(z,\eps)$. We show that for many elements $z\in Y$ there are points on $C_z$ which are not $\eps$-visible. Like in \cite{DJ} we distinguish between two types of $\eps$-visible points on $C_z$; points $p\in C_z$ that are $\eps$-visible by a ray that is almost tangent to $C_z$ at $p$ are called \emph{tangentially visible}, and other $\eps$-visible points on $C_z$ are called \emph{frontally visible}. In Lemma \ref{lem:no_tangent_v} we show that every circle\footnote{Even every arc of every circle.} of radius $\eps$ contains points that are not tangentially visible. Then in Lemma \ref{lem:not_all_are_frontally_visible} we show that for a large enough $T$ only a fraction of the circles $C_z$ in $B({\bf 0},T)$, for $z\in Y$, contains points which are frontally visible. These two together imply that for a large enough $T$, some portion of the circles $C_z$ in $B({\bf 0},T)$, for $z\in Y$, contain points that are not $\eps$-visible. In particular, such points exist.

%%%%%%%%%%%%%%%%%%%%%%%%%%%%%%%%%%%%%

\subsection{Terminology}
Given a circle $C$ in the plane and $\sigma,\alpha\in [0,2\pi)$ we denote by $A(\sigma;\alpha)$ the arc of the circle $C$ that corresponds to the central angle that lies between $\sigma$ and $\sigma+\alpha$. The function $a:[0,2\pi)\to C$ maps an angle $\alpha$ to the point on $C$, which is the intersection of $C$ and the ray in direction $\alpha$ from the center of $C$. For two points $x,y\in\R^2$ we denote by $\overline{xy}$ the line segments that connects $x$ and $y$. 

%%%%%%%%%%%%%%%%%%%%%%%%%%%%%%%%%%%%%
\begin{definition}
Let $\eps,\delta>0$ and let $C\subseteq\R^2$ be a circle.
\begin{itemize}
\item
A point $p\in C$ is called \emph{$\delta$-tangentially-$\eps$-visible} ($\DTEV$) if $p$ is $\eps$-visible by a ray $L_{p,v}$ that satisfies:
\begin{itemize}
\item[(i)]
$L_{p,v}\cap C=\{p\}$ ($L_{p,v}$ intersects $C$ only at the tangent point).
\item[(ii)]
The angle between $L_{p,v}$ and the tangent to $C$ at $p$ is at most $\delta$.
\end{itemize}
\item
An arc of $C$ is called $\DTEV$ if {\bf every} point on that arc is $\DTEV$. 
\item If $p=a(0)$ is the point where the tangent to $C$ at $p$ is vertical, there are two directions in which a ray is almost tangent to $p$, and we distinguish between them in the following way. We say that a ray is \emph{pointing downwards} (respectively \emph{upwards}) to describe rays that point in these two directions, up to a small error. We say that $p$ is \emph{$\DTEV$ from below} (respectively \emph{$\DTEV$ from above}) if $p$ is $\DTEV$ by a ray pointing downwards (respectively upwards), up to an error angle $\delta$ at $p$ from the tangent to $p$. We adapt this terminology to other points $q=a(\alpha)$ on $C$ by rotating the plane so that $q=a(0)$. Note that this terminology will be used in the proof for points which are close to $a(0)$, where the rays truly point almost vertically downwards or almost vertically upwards. 
\item
A point $p\in C$ that is $\eps$-visible but not $\DTEV$ is called \emph{$\delta$-frontally-$\eps$-visible} ($\DFEV$). A ball $B$ is called $\DFEV$ if {\bf some} point on its boundary is $\DFEV$.
\end{itemize}     
\end{definition}
%%%%%%%%%%%%%%%%%%%%%%%%%%%%%%%%%%%

\subsection{Proof of Theorem \ref{thm:Michael's_conj}}\label{subsec:proof_of_Micael's_conj}
We begin with a lemma that asserts that for relatively dense sets $Y$, $\DTEV$ arcs does not exists, where $\delta=\delta(\varepsilon)$ is small enough. 

%%%%%%%%%%
\begin{lem}\label{lem:no_tangent_v}
Let $Y\subseteq\R^2$ be an $R$-dense set. Then for every $\eps,\alpha>0$ there exists $\delta=\delta(\eps,R,\alpha)>0$ such that for every $x\in\R^2$, every arc of central angle $\alpha$ in $C\df\partial B(x,\eps)$ is not $\DTEV$.
\end{lem}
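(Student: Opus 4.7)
The plan is to argue by contradiction: assume every $p\in A$ is $\DTEV$, and construct a ball of radius $R$ in $\R^2$ that avoids the $\eps$-neighborhood of $Y$, contradicting the $R$-density of $Y$.

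For each $p\in A$, choose $v_p\in\SS^1$ at angle at most $\delta$ from a tangent direction at $p$ so that $L_{p,v_p}$ is $\eps$-visible; classify $p$ as ``$+$'' or ``$-$'' according as $v_p$ is near the counterclockwise or the clockwise tangent at $p$, giving a partition $A=A^+\sqcup A^-$. After a possible change of orientation assume $|A^+|\ge\alpha/2$. A pigeonhole on a fine equipartition of $A$, combined with iteration (or an appeal to Lebesgue density), then extracts a sub-arc $A_0\subseteq A$ of central angle $\alpha_0$ comparable to $\alpha$ in which the relative density of $A^+$ is close to $1$.

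The geometric heart of the proof is the following. Write $T_p^+$ for the forward (counterclockwise) tangent ray at $p$. For $p\in A^+\cap A_0$ and any point $Q$ on $T_p^+$ at Euclidean distance $D\le\eps/\delta$ from $p$, the elementary estimate $\dist(Q,L_{p,v_p})\le D\sin\delta\le D\delta\le\eps$ shows that the $\eps$-neighborhood of $L_{p,v_p}$ contains the segment $T_p^+\cap B(p,\eps/\delta)$. Taking the union over $p\in A^+\cap A_0$, the $\eps$-neighborhood of the chosen $\DTEV$ rays contains the truncated tangent fan of $A^+\cap A_0$. This fan is a two-dimensional region about $x$ with angular opening $\alpha_0$ at each radius $r\in[\eps,\eps/\delta]$; at radius $r_0=2R/\alpha_0$ its arclength $r_0\alpha_0=2R$ and its radial thickness are both at least of order $R$, so it contains a closed ball $B^*$ of radius $R$. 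The angular $\eps$-thickening inherent in the $\eps$-neighborhoods of individual rays closes off angular gaps of size at most $2\eps/r_0$, so provided the density of $A^+$ in $A_0$ is close enough to $1$, the ball $B^*$ is genuinely contained in $\bigcup_{p\in A^+\cap A_0}\{q\in\R^2:\dist(q,L_{p,v_p})<\eps\}$.

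Choosing $\delta\le c\,\eps\alpha/R$ for a suitable absolute constant $c$ guarantees $\eps/\delta\ge r_0+R$, so the truncation does not cut off $B^*$. By the $R$-density of $Y$, $B^*$ contains some $y\in Y$; but then $\dist(y,L_{p,v_p})<\eps$ for some $p\in A^+\cap A_0$, contradicting the $\eps$-visibility of $L_{p,v_p}$. The main obstacle I anticipate is the density-extraction step—producing a sub-arc $A_0$ whose size is bounded below purely in terms of $\eps,R,\alpha$ and in which the density of $A^+$ is large enough that the angular gaps created by $A^-\cap A_0$ are smaller than the $\eps$-thickening scale. Once this is handled, the trigonometric fan estimate and the $R$-density contradiction are routine.
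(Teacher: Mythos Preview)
Your overall strategy---assume the arc $A$ is $\DTEV$, split into ``up'' and ``down'' tangent classes, and then cover a ball of radius $R$ by the $\eps$-neighbourhoods of the chosen rays to contradict $R$-density---is exactly the paper's. The gap is precisely where you anticipate it, and it is genuine.

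The step ``extract a sub-arc $A_0$ of central angle $\alpha_0$ comparable to $\alpha$ in which the relative density of $A^+$ is close to $1$'' cannot be carried out as stated. First, neither pigeonhole-plus-iteration nor Lebesgue density yields a lower bound on $\alpha_0$ that depends only on $\eps,R,\alpha$: the scale at which a Lebesgue density point of $A^+$ has density close to $1$ depends on the set $A^+$ itself, hence on $Y$ and on $\delta$. Since your choice of $\delta$ later must satisfy $\delta\lesssim\eps\alpha_0/R$, this creates a circularity---$\delta$ would have to be chosen after you see $A^+$, but $A^+$ is only defined once $\delta$ is fixed. Second, even granting a sub-arc with $A^+$-density $1-\eta$, this only bounds the \emph{total} measure of $A^-\cap A_0$; a single gap of size $\eta\alpha_0$ produces, at radius $r_0=2R/\alpha_0$, a hole of width $2R\eta$ in the fan, which is far larger than the $\eps$-thickening can close once $R\gg\eps$. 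So ``density close to $1$'' does not imply ``all gaps $\le 2\eps/r_0$'', which is what you actually need.

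The paper replaces this density step by a two-level finite dichotomy that avoids both issues. One subdivides $A$ into $4N$ sub-arcs $A_i$ (where $N=2R/\eps$) and samples $4N$ equally spaced points $p_{i,j}$ in each. Either (Case~1) some $A_i$ has all $4N-1$ of its sample points $\DTEV$ from below, in which case those points are used; or (Case~2) every $A_i$ contains a sample point $\DTEV$ from above, and one such point $p'_i$ is taken from each $A_i$. In either case one obtains $4N$ points, all in the \emph{same} tangent class, with spacing bounded a priori by $\alpha/(16N^2)$ (Case~1) or $\alpha/(4N)$ (Case~2). The triangle bounded by the two extreme tangent rays then contains a ball of radius $R$, and a direct length estimate on the base shows that consecutive rays meet the base at points at most $\eps/2$ apart, so every $\eps$-ball inside meets some ray. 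With this dichotomy the value $\delta=\alpha/(16N^2)$ is fixed from the start, and no density or measure argument is needed.
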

%%%%%%%%%%

%%%%%%%%%%%%%%%%%%%%%%%%%%%%%%%%%%%
\begin{proof}
It suffices to prove the statement for $\alpha\in(0,\pi/6)$. Let $\eps>0, \alpha\in(0,\pi/6)$. To simplify notations, we may assume that $R$ is an integer multiple of $\eps$ (by replacing $R$ by some number in $[R,R+\eps)$). Set 
\begin{equation}\label{eq:delta}
N\df\frac{2R}{\eps},\qquad \beta\df\frac{\alpha}{4N}\qquad \text{ and }\qquad \delta\df \frac{\beta}{4N}=\frac{\alpha}{16N^2}. 
\end{equation}
Let $x\in\R^2$, and $C\df\partial B(x,\eps)$. Without loss of generality we prove the lemma for the arc $A\df A(0;\alpha)$. For contradiction, assume that $A$ is $\DTEV$.

Divide $A$ into $4N$ arcs, of equal length, with the points $q_i\df a\left(\frac{i\alpha}{4N}\right)$, for $i\in\{0,\ldots,4N\}$. Each of these sub-arcs has central angle $\beta$, and we denote it by $A_i\df A(q_i;\beta)$, for $i\in \{0,\ldots,4N-1\}$. We divide each $A_i$ into $4N$ arcs of equal length with the points $p_{i,j}\df a\left(\frac{i\alpha}{4N}+\frac{j\beta}{4N}\right)$, for $j\in\{0,\ldots,4N\}$. Consider the following two cases:\\
\underline{Case 1:} There exists an $i\in\{0,\ldots,4N-1\}$ such that all the points $p_{i,1},\ldots,p_{i,4N-1}$ are $\DTEV$ from below:\\  
For $j\in\{0,\ldots,4N-1\}$ let $L_j$ be the ray tangent to $C$ at $p_{i,j}$ that points downwards, $r_j$ a ray that indicates that $p_{i,j}$ is $\DTEV$ from below, and $L_j'$ the ray pointing downwards that intersects $C$ only at $p_{i,j}$ and that create an angle $\delta$ at $p_j$ between $L_j$ and $L_j'$. Let $L_{4N}$ be the ray tangent to $p_{i,4N}=p_{i+1,0}$ that points downwards. Denote by $z$ the intersection point of $L_0$ and $L_{4N}$ and let $a_0$ and $a_{4N}$ be two points on $L_0$ and $L_{4N}$ respectively such that the triangle with vertices $a_0, a_{4N}, z$ is the minimal isosceles triangle that contains a ball of radius $R$ (see Figure \ref{F:123} (a)). Since $\beta<\pi/6$, the legs of that triangle are indeed $\overline{za_0}$ and $\overline{za_{4N}}$, and the base is $I\df\overline{a_0a_{4N}}$.%This minimality in particular implies that $\dist(a_0,a_N)\le 3R$. 

\begin{figure}[ht!]
\begin{center}
\includegraphics[scale=0.4]{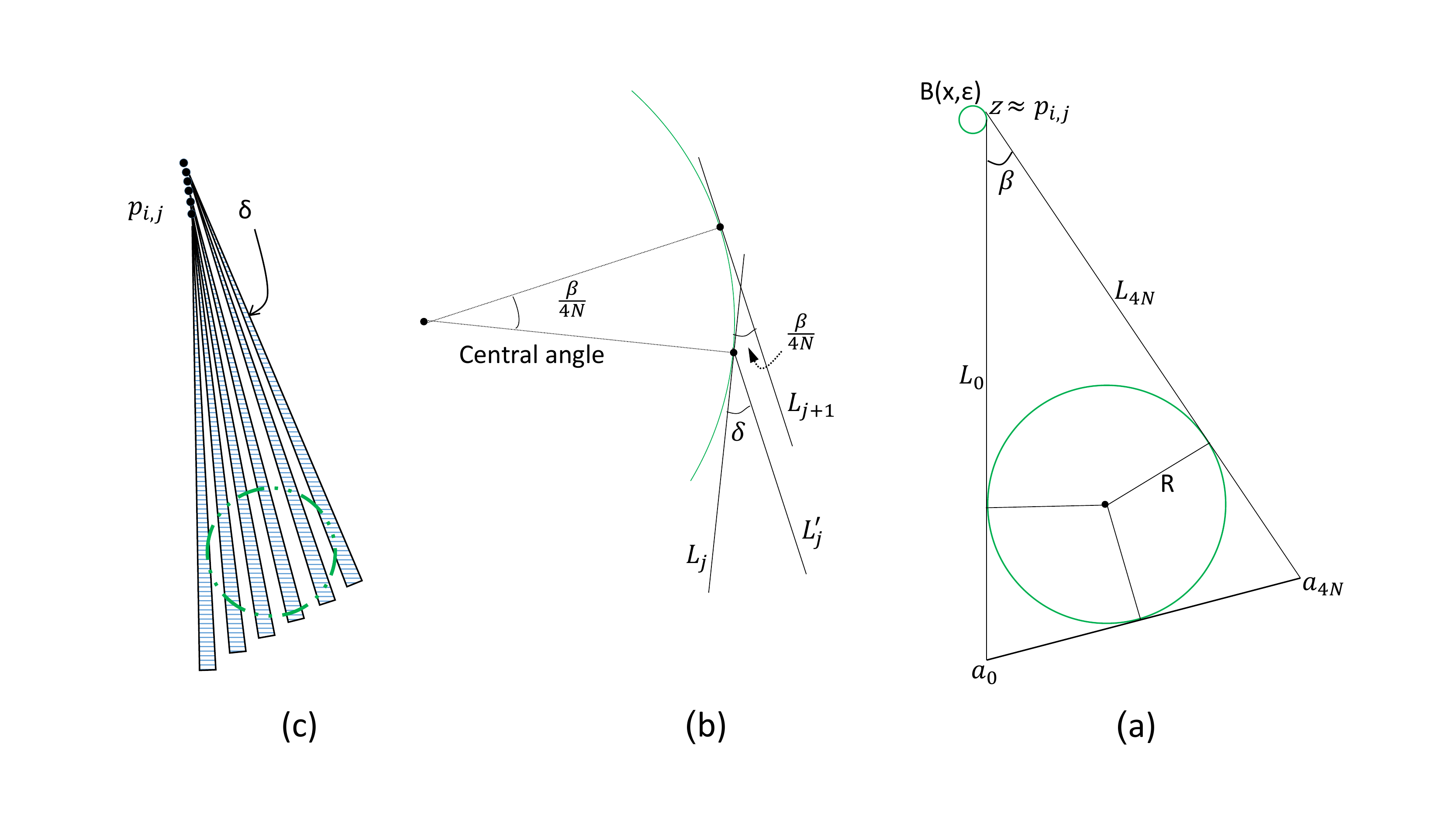}%[scale=0.7]
\caption{}
\label{F:123}
\end{center}
\end{figure} 

For every $j\in\{0,\ldots,4N-1\}$ let $b_j$ be the intersection point of $r_j$ and $I$, and $a_j$ the intersection point of $L_j$ and $I$. Our next goal is to show that the points $\{b_0,\ldots,b_{4N-1}\}$ divide $I$ into segments of lengths less than $\eps$. This in turn implies that the rays $r_j$ divide $B$ in a way that every ball of radius $\eps$ that is centered in $B$ intersects at least one of the rays $r_j$ (see Figure \ref{F:123} (c)). Since $Y$ is $R$-dense, there exists some $y\in B\cap Y$, which contradicts the assumption that the points $p_{i,1},\ldots,p_{i,4N-1}$ are $\DTEV$ by the rays $r_1,\ldots,r_{4N-1}$.

Using elementary geometry (see Figure \ref{F:123} (a)) it is easy to show that for every $j\in\{0,\ldots,4N-1\}$ we have $\dist(p_{i,j},a_j)\le \frac{4R}{\beta}$. Since $\delta=\beta/4N$ the slope of the ray $L_{j+1}$ is equal to the slope of $L_j'$ (see Figure \ref{F:123} (b)). This implies that $b_j$ lies between $a_j$ and $a_{j+1}$ on $I$. In addition we have
%\footnote{Note that the $4N$ points $a_j$ are almost equidistributed on $I$, that has length smaller than $3R$. Then a better estimate for the distance below should hold.}
\[
\dist(a_j,a_{j+1})\le 2\dist(p_{i,j},a_j)\sin\left(\frac{\beta}{8N}\right)\le 2\frac{4R}{\beta}\frac{\beta}{8N}=\frac{R}{N}=\frac{\eps}{2},
\]
which implies the assertion.  \\
\underline{Case 2:} For every $i\in\{0,\ldots,4N-1\}$ there is a $j\in\{0,\ldots,4N-1\}$ such that $p'_i\df p_{i,j}$ is $\DTEV$ from above:\\
We repeat the argument from case $1$ in a larger scale. For simplicity, we use the same notations. Here we denote by $L_i$, for $i\in\{0,\ldots,4N\}$, the ray tangent to $C$ at $q'_i=a\left(\frac{i\alpha}{4N}-\frac{\beta}{4N}\right)$ that points upwards, and by $r_i$, for $i\in\{0,\ldots,4N-1\}$, a ray that indicates that $p'_i$ is $\DTEV$ from above. Denote by $z$ the intersection point of $L_0$ and $L_{4N}$ and let $a_0$ and $a_{4N}$ be two points on $L_0$ and $L_{4N}$ respectively such that the triangle with vertices $a_0, a_{4N}, z$ is the minimal isosceles triangle that contains a ball of radius $R$. Since $\alpha<\pi/6$, the legs of that triangle are $\overline{za_0}$ and $\overline{za_{4N}}$, and the base is $I\df\overline{a_0a_{4N}}$. 

For $i\in\{0,\ldots,4N-1\}$ let $b_i$ be the intersection point of $r_i$ and $I$, and $a_i$ the intersection point of $L_i$ and $I$. Once again it is easy to verify that $\dist(q'_i,a_i)\le\frac{4R}{\alpha}$, and that $b_i$ lies between\footnote{This follows from the term $-\frac{\beta}{4N}$ that appears in the definition of the new points $q'_i$.} $a_i$ and $a_{i+1}$ on $I$, for every $i\in\{0,\ldots,4N-1\}$. In addition we have 
\[\dist(a_i,a_{i+1})\le 2\,\dist(q'_i,a_i)\sin\left(\frac{\alpha}{8N}\right)\le 2\,\frac{4R}{\alpha}\,\frac{\alpha}{8N}=\frac{R}{N}=\frac{\eps}{2},\]
which implies the assertion in a similar manner, and hence completes the proof of Lemma \ref{lem:no_tangent_v}.
\end{proof}
%%%%%%%%%%%%%%%%%%%%%%%%%%%%%%%%%%%

The proofs of the following two geometric lemmas are straightforward, and we leave them to the reader.

%%%%%%%%%%%%%
\begin{lem}\label{lem:sparse_intersections_on_the_boundary}
Let $T>\mu>0$. Suppose that $a_i=(x_i,y_i)\in\R^2, i\in\{1,\ldots,4\}$, satisfy (see Figure \ref{F:4})
\begin{equation}
\label{eq:positive_distance_between_points_at_dist_T}
x_1=x_2=5T, \ y_1\le y_2 \in \left[-\frac{\mu}{2},\frac{\mu}{2}\right], \ x_3=x_4, y_4-y_3\ge\mu,
\end{equation}
and $a_3,a_4\in B\df [-T,T]^2$. Let $\ell_1=\overline{a_1b},\  \ell_2=\overline{a_2c}$, where $b\neq c\in\{a_3,a_4\}$, and denote by $d_1,d_2$ the intersection point of $\ell_1,\ell_2$ respectively with $\partial B$. Then $\dist(d_1,d_2)\ge \frac{\mu}{3}$ (see Figure \ref{F:4}).  
\end{lem}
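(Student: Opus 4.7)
The plan is to reduce the problem to a one-dimensional calculation on the right edge of the box, namely $\{T\}\times[-T,T]\subseteq\partial B$. First I would check that both $d_1$ and $d_2$ actually lie on that edge. Since $a_1,a_2$ have $x$-coordinate $5T>T$ while $a_3,a_4$ share an $x$-coordinate $x_3\in[-T,T]$, each segment $\ell_i$ is forced to cross the vertical line $x=T$. Parametrizing $\ell_i$ linearly with $t=0$ at $a_i$ and $t=1$ at its endpoint inside $B$, the $x=T$ crossing occurs at the common value
\[
t=\tau\df\frac{4T}{5T-x_3}\in\bigl[\tfrac{2}{3},\,1\bigr].
\]
Using $|y_1|,|y_2|\le \mu/2\le T/2$ (this is where the hypothesis $T>\mu$ is used) together with $|y_3|,|y_4|\le T$, the $y$-coordinate of each crossing is a convex combination of two numbers in $[-T,T]$, hence lies in $[-T,T]$. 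So the crossings genuinely occur on the right edge rather than slipping off the top or bottom.

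Once this is in hand, the $y$-coordinates of $d_1$ and $d_2$ can be written as
\[
y_{d_1}=(1-\tau)y_1+\tau y_b,\qquad y_{d_2}=(1-\tau)y_2+\tau y_c,
\]
where $\{y_b,y_c\}=\{y_3,y_4\}$. Since $d_1$ and $d_2$ share the $x$-coordinate $T$,
\[
\dist(d_1,d_2)=|y_{d_1}-y_{d_2}|=\bigl|\tau(y_b-y_c)+(1-\tau)(y_1-y_2)\bigr|.
\]
By hypothesis $|y_b-y_c|=y_4-y_3\ge \mu$, while $|y_1-y_2|\le \mu$ because $y_1,y_2\in[-\mu/2,\mu/2]$. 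The reverse triangle inequality together with $\tau\ge 2/3$ then yields
\[
\dist(d_1,d_2)\ge \tau\mu-(1-\tau)\mu=(2\tau-1)\mu\ge \frac{\mu}{3},
\]
which is the desired bound; the two choices of the pairing $b\neq c\in\{a_3,a_4\}$ give the same estimate by symmetry.

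The argument is a short linear interpolation once one sees that the geometry forces the two exit points to lie on a common vertical line. The ``$5T$ versus $T$'' gap in the hypothesis is precisely what pins the interpolation weight $\tau$ into $[2/3,1]$, making the vertical separation between $a_3$ and $a_4$ dominate the small perturbation between $a_1$ and $a_2$. I do not anticipate a genuine obstacle; the only step that needs a little care is the initial verification that $d_1$ and $d_2$ sit on the right edge of $\partial B$, which is exactly the point where the assumption $T>\mu$ is used.
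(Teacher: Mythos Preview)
Your proof is correct. The paper itself leaves this lemma to the reader, with a brief (suppressed) sketch that splits into two cases according to whether $b=a_3$ or $b=a_4$: in the non-crossing case it reduces to $y_1=y_2$, and in the crossing case it locates the intersection point $z$ of $\ell_1$ and $\ell_2$, argues that $\dist(z,B)\ge T$, and finishes by similar triangles to obtain $\dist(d_1,d_2)\ge \mu\cdot\frac{T}{3T}=\mu/3$.

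Your route is genuinely different and, in a sense, cleaner: by observing that both segments hit the line $x=T$ at the \emph{same} parameter $\tau=\frac{4T}{5T-x_3}$ (because $x_3=x_4$), you reduce everything to a single linear-interpolation estimate and avoid the case split entirely. The similar-triangles argument and your $(2\tau-1)\mu$ bound are of course two faces of the same linear picture, but your formulation makes the role of the ``$5T$ versus $T$'' ratio completely explicit and handles both pairings $b\neq c$ at once. One small remark: for the verification that $d_1,d_2$ lie on the right edge you only need $|y_1|,|y_2|\le T$, so the stated bound $\mu/2\le T/2$ is a bit stronger than necessary, but of course still valid under the hypothesis $T>\mu$.
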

%%%%%%%%%%%%%

\ignore{
%%%%%%%%%%%%%%%%%%%%%%%%%%%%%%%%%%%
\begin{proof}
Consider the following two cases:\\
\underline{Case 1:} IF $b=a_3$, then $c=a_4$, and the line segments $\ell_1$ and $\ell_2$ has no intersection. Since $y_2-y_1\le\mu$ and $y_4-y_3\ge\mu$, it suffices to prove the assertion under the assumption that $y_1=y_2$, and in that case the proof is trivial.  
\\
\underline{Case 2:} IF $b=a_4$, then $c=a_3$, and the line segments $\ell_1$ and $\ell_2$ intersect at a point $z$. From $y_2-y_1\le\mu$ and $y_4-y_3\ge\mu$ one deduces that $\dist(z,B)\ge T$. Then using similar triangles we obtain
\[\dist(d_1,d_2)\ge \mu\cdot\frac{T}{3T}=\frac{\mu}{3}.\]
\end{proof}
%%%%%%%%%%%%%%%%%%%%%%%%%%%%%%%%%%%
}

\begin{figure}[ht!]
\begin{center}
\includegraphics[scale=0.3]{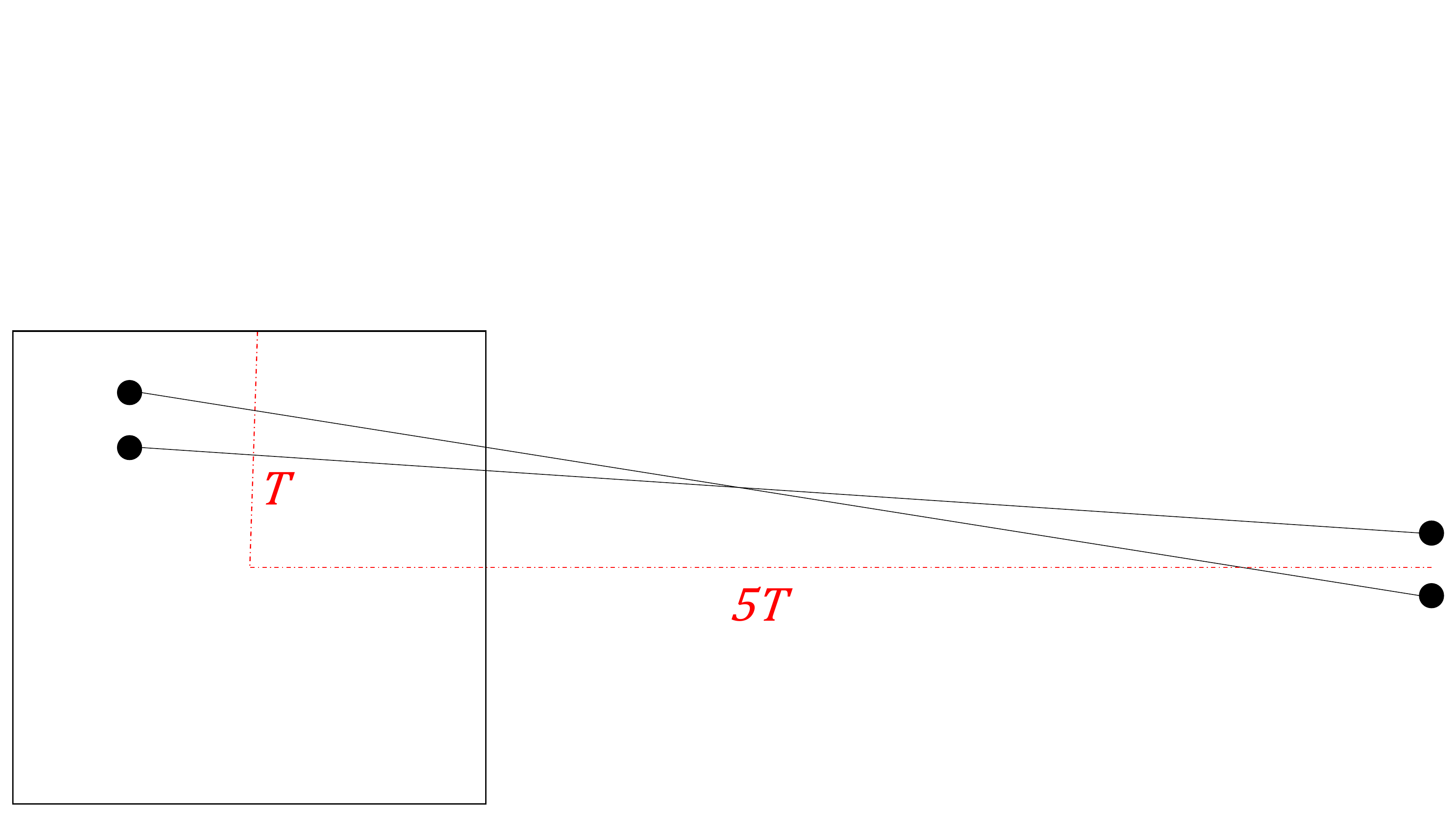}%[scale=0.7][height=5cm,width=12cm]
\caption{}
\label{F:4}
\end{center}
\end{figure} 

%%%%%%%%%%%%%
\begin{lem}\label{lem:mu_def}
Let $\eps>\delta>0$. For $z\in\R^2$ if a ball $B(z,\eps)$ is $\DFEV$ by a ray $L$ then there is a point $a\in B(z,\eps)$, that lies on the continuation of $L$, with $\dist(a,\partial B(z,\eps))>\mu$, where
\begin{equation}\label{eq:mu_def}
\mu\df\frac{\eps\delta^2}{4}.
\end{equation} 
\end{lem}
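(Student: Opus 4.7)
The plan is a direct planar-geometry computation, with one preparatory step. Let $p\in\partial B(z,\eps)$ be a $\DFEV$ point witnessing that $B(z,\eps)$ is $\DFEV$, and let $L=L_{p,v}$ be the associated $\eps$-visibility ray. The first step is to extract from the $\DFEV$ hypothesis that the angle $\theta$ between $L$ and the tangent to $C_z\df\partial B(z,\eps)$ at $p$ exceeds $\delta$. In the setting of Theorem \ref{thm:Michael's_conj} the center $z$ belongs to the relatively dense set $Y$, so the $\eps$-visibility requirement $\dist(L,z)\ge\eps$ forces $L$ to remain outside the open ball $B(z,\eps)$; in particular $L\cap C_z=\{p\}$, so condition~(i) in the definition of $\DTEV$ is automatic. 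Because $p$ is not $\DTEV$, condition~(ii) must fail, whence $\theta>\delta$ and $L$ points outward.

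Next, I would place coordinates with $z$ at the origin, $p=(\eps,0)$, and, after a rotation, $v=(\sin\theta,\cos\theta)$, so the tangent at $p$ is vertical. The opposite ray from $p$ --- the continuation of $L$ into $B(z,\eps)$ --- is parametrized as
\[
p - sv = (\eps - s\sin\theta,\ -s\cos\theta),\qquad s\ge 0.
\]
Solving $\norm{p-sv}^2=\eps^2$ yields $s\in\{0,\,2\eps\sin\theta\}$, so the continuation cuts through $B(z,\eps)$ in a chord of length $2\eps\sin\theta$, whose midpoint is
\[
a \df p - \eps\sin\theta\cdot v = (\eps\cos^2\theta,\,-\eps\sin\theta\cos\theta),
\]
with $\norm{a}=\eps\cos\theta$. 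Thus $a\in B(z,\eps)$ and
\[
\dist(a,\partial B(z,\eps)) = \eps - \eps\cos\theta = \eps(1-\cos\theta).
\]

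To finish, I would invoke the elementary inequality $1-\cos\delta\ge\delta^2/4$, which holds on $(0,\pi/2]$ and amply covers our regime (in which $\delta$ is chosen much smaller than $\eps$). Since $\theta>\delta$ and cosine is strictly decreasing on $[0,\pi/2]$,
\[
\dist(a,\partial B(z,\eps)) = \eps(1-\cos\theta) > \eps(1-\cos\delta) \ge \frac{\eps\delta^2}{4} = \mu,
\]
as desired. I do not foresee any substantial obstacle; the one mildly delicate point is isolating condition~(ii) as the failing condition, which relies on the ambient assumption $z\in Y$ from the proof of Theorem~\ref{thm:Michael's_conj}.
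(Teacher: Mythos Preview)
Your proof is correct and follows essentially the same route as the paper's: the paper's sketch takes $a$ to be the point where the continuation of $L$ meets a diameter perpendicularly (which is exactly your chord midpoint), and then records the estimate $\dist(a,\partial B(z,\eps))\ge\eps(1-\cos\delta)>\eps\delta^2/4$ ``by elementary geometry.'' You have simply filled in that elementary geometry with explicit coordinates, and your care in isolating why condition~(i) holds (via $z\in Y$) is a useful clarification the paper leaves implicit.
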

%%%%%%%%%%%%%

\ignore{
%%%%%%%%%%%%%%%%%%%%%%%%%%%%%%%%%%%
\begin{proof}
Let $p\in\partial B(z,\eps)$ be a $\DFEV$ point by a ray $L$. The continuation of $L$ intersects a diameter of $B(z,\eps)$ perpendicularly at some point $a\in B(z,\eps)$. By elementary geometry we have
\[\dist(a,\partial B(z,\eps))\ge\eps(1-\cos(\delta))>\frac{\eps\delta^2}{4}=\mu.\]  
\end{proof}
%%%%%%%%%%%%%%%%%%%%%%%%%%%%%%%%%%%
}

For the proof of the next lemma we rely on the following proposition, see Lemma $6$ in \cite{DJ}.
%\comyaar{This is the theorem of \cite{DJ} that I have hopes we can improve.}

%%%%%%%%%%%%%
\begin{prop}\label{prop:DJ's_Lemma}
Let $k,c,\eta>0$ and let $I$ be an interval of length $\absolute{I}$. Let $A\subseteq I$ be a finite set with at least $c\absolute{I}$ points, which are at least $\eta$ apart from each other. Set
\begin{equation}\label{eq:DJ_constants}
r\df\frac{k}{k-1}, j\df\ceil{\frac{\log\frac{2}{c\eta}}{\log r}}, \text{ and } Z_0=Z_0(k,c,\eta)\df 2\eta k^j.
\end{equation}
Then if $\absolute{I}\ge Z_0$ there exists some $x\ge 2\eta$, and a sub-interval $J\subseteq I$ of length $kx$ such that the subdivision of $J$ into $k$ equal sub-intervals $J_1,\ldots, J_k$ satisfies $J_i\cap A\neq\varnothing$ for every $i$. 
\end{prop}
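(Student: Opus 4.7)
The plan is an iterative pigeonhole subdivision, refining to smaller and smaller sub-intervals whenever the desired configuration fails to appear. Setting $I^{(0)}\df I$, at step $m$ I would partition the sub-interval $I^{(m)}$ of length $L^{(m)}\df |I|/k^m$ (which contains $N^{(m)}$ points of $A$) into $k$ equal sub-intervals. If each of these $k$ sub-intervals meets $A$, the process terminates successfully with $J\df I^{(m)}$ and $x\df L^{(m)}/k$. Otherwise at least one sub-interval is disjoint from $A$, so all $N^{(m)}$ points lie in the remaining $k-1$ sub-intervals; pigeonhole then yields one sub-interval containing at least $N^{(m)}/(k-1)$ points, and I take that as $I^{(m+1)}$. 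A straightforward induction gives $N^{(m)}\geq c|I|/(k-1)^m = c r^m L^{(m)}$.

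The hypothesis $|I|\geq Z_0 = 2\eta k^j$ ensures that for every $m\leq j-1$ the common length of the $k$ sub-intervals of $I^{(m)}$ satisfies $L^{(m)}/k = |I|/k^{m+1}\geq |I|/k^{j}\geq 2\eta$, so successful termination at any such step automatically produces the required $x\geq 2\eta$. The heart of the argument is to show that successful termination must indeed occur at some step $m\in\{0,1,\ldots,j-1\}$. Suppose, for contradiction, that subdivision fails at every step $m=0,1,\ldots,j-1$; then $I^{(j)}$ exists with $L^{(j)}=|I|/k^j\geq 2\eta$ and $N^{(j)}\geq c r^j L^{(j)}$. By the definition of $j$, $r^j\geq 2/(c\eta)$, so $c r^j\geq 2/\eta$ and $N^{(j)}\geq 2L^{(j)}/\eta$. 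Since $A$ is $\eta$-separated, any interval of length $L$ contains at most $L/\eta+1$ points of $A$, giving $N^{(j)}\leq L^{(j)}/\eta+1$. Combining these two inequalities yields $L^{(j)}/\eta\leq 1$, contradicting $L^{(j)}\geq 2\eta$, and this completes the proof.

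I expect the main obstacle to be purely one of bookkeeping: the parameters $j$ and $Z_0$ must be calibrated tightly enough so that the $\eta$-separation bound bites at exactly step $j$, while the invariant $L^{(m)}/k\geq 2\eta$ simultaneously persists for every $m\leq j-1$. With the stated values of $r$, $j$, and $Z_0$ this balance works out cleanly, and no additional structural ideas beyond the pigeonhole refinement are required.
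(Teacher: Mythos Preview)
Your iterative pigeonhole refinement is correct and the bookkeeping checks out: the density lower bound $N^{(m)}\ge c\,r^{m}L^{(m)}$ combined with the $\eta$-separation upper bound $N^{(m)}\le L^{(m)}/\eta+1$ forces termination by step $j-1$, and the hypothesis $|I|\ge Z_0$ guarantees $x\ge 2\eta$ at that point. Note, however, that the paper does not supply its own proof of this proposition; it is quoted as Lemma~6 of Dumitrescu--Jiang \cite{DJ}, and your argument is essentially the standard one given there.
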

%%%%%%%%%%%%%

%%%%%%%%%%%%%
\begin{lem}\label{lem:not_all_are_frontally_visible}
Let $\eps>0$, $Y\subseteq\R^2$ be an $R$-dense set such that $R$ is an integer multiple of $\eps$ and set $N=\frac{2R}{\eps}, \delta=\frac{\eps^2}{2^7\cdot R^2}$, and $C=\frac{1}{4R^2}$. Then for every 
\begin{equation}\label{eq:T_and_j}
T\ge \frac{\eps}{2}(4N)^j, \quad\text{ where }\quad  j=\ceil{\frac{33 + 10\log N}{\log(4N) - \log(4N-1)}},
\end{equation} 
the number of $z\in Y\cap B({\bf 0},T)$ such that $B(z,\eps)$ is $\DFEV$ is less than $CT^2$. 
\end{lem}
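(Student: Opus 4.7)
The plan is a contradiction argument in the spirit of Dumitrescu--Jiang \cite{DJ}. Suppose more than $CT^2$ centers $z\in Y\cap B(\mathbf{0},T)$ produce balls $B(z,\varepsilon)$ that are $\delta$-F-$\varepsilon$-V, and for each such $z$ fix a witness ray $L_z=L_{p_z,v_z}$. First I pigeonhole by direction: partition $\SS^1$ into $4N$ equal arcs; one arc contains the $v_z$'s of at least $CT^2/(4N)$ of the rays, and after a rotation of coordinates I may assume this arc is a small arc around $e_1=(1,0)$. I then extend these nearly horizontal rays to the distant vertical wall $\ell=\{x\in\R^2:\pi_1(x)=5T\}$, recording the hit points $q_z=(5T,y_z)$; these form a set $A\subseteq I$ for some interval $I\subseteq\ell$ of length $O(T)$.

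Next I establish that $A$ is $\eta$-separated for $\eta=\mu/3$, where $\mu=\varepsilon\delta^2/4$ comes from Lemma \ref{lem:mu_def}. By that lemma, each F-$\varepsilon$-V ray $L_z$ has its supporting line passing through a point $a_z\in B(z,\varepsilon)$ at distance larger than $\mu$ from $\partial B(z,\varepsilon)$. Combining this with Lemma \ref{lem:sparse_intersections_on_the_boundary} shows that two wall-hits closer than $\mu/2$ would force the corresponding crossings of $\partial B(\mathbf{0},T)$ by the two rays to be at least $\mu/3$ apart, which together with the $\varepsilon$-clearance of both rays around the deep points $a_{z_i}$ rules out the configuration and yields the claimed $\eta$-separation.

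With $|A|\ge CT^2/(4N)$ inside $I$ and $A$ being $\eta$-separated, the density $c=|A|/|I|$ has an explicit lower bound. I then apply Proposition \ref{prop:DJ's_Lemma} with $k=4N$: the threshold $Z_0=2\eta(4N)^j$ is exactly what the choice $T\ge\frac{\varepsilon}{2}(4N)^j$ (with the specific $j$ from \eqref{eq:T_and_j}) is designed to dominate, so $|I|\ge Z_0$. The proposition returns a sub-interval $J\subseteq I$ of length $4Nx$ for some $x\ge 2\eta$, divided into $4N$ equal cells each hosting a point of $A$. Pulling back into the plane yields $4N$ nearly parallel rays whose wall-crossings are regularly spaced with period $x$.

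The final step, and the main obstacle, is to cash in these $4N$ rays for a contradiction with the $R$-density of $Y$. Each ray's $\varepsilon$-tube is $Y$-free, and when pulled back from $\ell$ (at horizontal distance at least $4T$ from $B(\mathbf{0},T)$) into $B(\mathbf{0},T)$ by perspective with distortion factor of order one, the $4N$ tubes together sweep out a strip whose perpendicular width and longitudinal extent both exceed $2R$; the calibration $N=2R/\varepsilon$, $\delta=\varepsilon^2/(2^7R^2)$, and the specific $j$ in \eqref{eq:T_and_j} are rigged precisely so that this strip contains a ball of radius $R$ disjoint from $Y$, contradicting $R$-density. The bulk of the technical work lies in tracking the perspective distortion and the precise gap structure between consecutive tubes to confirm that the strip is truly $Y$-free over a full $R$-ball.
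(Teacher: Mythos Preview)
Your pigeonhole step is the wrong one, and it breaks the rest of the argument. You sort the rays by \emph{direction} into $4N$ arcs of $\SS^1$, obtaining a class of size at least $CT^2/(4N)$ whose rays hit the wall $\ell=\{5T\}\times\R$ in an interval $I$ of length $O(T)$. You then claim the set $A$ of wall-hits is $\eta$-separated with $\eta=\mu/3$. But that is impossible once $T$ is large: an $\eta$-separated subset of an interval of length $O(T)$ has at most $O(T/\mu)$ points, while you are asserting $\#A\ge CT^2/(4N)$, which grows quadratically in $T$. So the separation claim fails by a crude count. Your invocation of Lemma~\ref{lem:sparse_intersections_on_the_boundary} is also inverted: that lemma takes closeness of the far-end hits as a \emph{hypothesis}, not as a conclusion to be contradicted.

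The deeper structural problem is that a directional pigeonhole produces nearly \emph{parallel} rays, whereas the argument needs rays that \emph{converge} to a tiny segment. In the paper the pigeonhole is by \emph{where} each ray meets $\partial B(\mathbf 0,5T)$: one inscribes a regular $M$-gon with $M=\lfloor 32T/\mu\rfloor$ and finds an edge $I_j$ of length $\le\mu$ met by at least $C\mu T/32$ rays. Now Lemma~\ref{lem:sparse_intersections_on_the_boundary} applies as stated (the far-end hits all lie in $I_j$, hence within $\mu$ of each other), giving $\mu/3$-separation of the crossings with $I=\{T\}\times[-T,T]$; one then thins to $\eta=\varepsilon/2$-separation and applies Proposition~\ref{prop:DJ's_Lemma} with $k=4N$. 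Crucially, the $4N$ selected rays all funnel through the tiny segment $I_j$, so the convex hull of $I_j\cup J$ is a long thin triangle containing an $R$-ball, and at that ball the $4N$ converging rays are $\varepsilon/2$-dense. Your parallel rays have no such focus: even if you had legitimately obtained regularly spaced wall-hits, Proposition~\ref{prop:DJ's_Lemma} only gives a \emph{lower} bound $x\ge 2\eta$ on the cell size, never an upper bound, so there is no location where the $4N$ $\varepsilon$-tubes squeeze together to cover an $R$-ball.
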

%%%%%%%%%%%%%

%%%%%%%%%%%%%%%%%%%%%%%%%%%%%%%%%%%
\begin{proof}
%First observe that $\delta$ is smaller than the one defined in (\ref{eq:delta}), for $\alpha=\pi/6$. Then by Lemma \ref{lem:mu_def}, if a ball $B(z,\eps)$ is $\DFEV$ then there is a point $a\in B(z,\eps)$ with $\dist(a,\partial B(z,\eps))\ge\mu$. 

For contradiction, assume that for at least $CT^2$ points $z\in Y\cap B({\bf 0},T)$ the balls $B_z= B(z,\eps)$ are $\DFEV$. Each of these $CT^2$ balls has a point $p_z$ on its boundary and a ray $L_z$, initiated at $p_z$, indicating that $B_z$ is $\DFEV$. Denote by $\mathcal{R}$ the set of these rays, $L_z$. 

Set 
\begin{equation}\label{eq:mu}
\mu\df\frac{\eps\delta^2}{4}=\frac{\eps^5}{2^{16}R^4},\quad\text{and}\quad M\df\floor{\frac{32 T}{\mu}}.
\end{equation}

Consider the larger ball $B({\bf 0},5T)$ and place $M$ equally spaced\footnote{Note that $32T>10\pi T$, which is the length of the diameter of $B({\bf 0},5T)$.} points $p_0,\ldots,p_{M-1}$ on $\partial B({\bf 0},5T)$. The tangents to $B({\bf 0},5T)$ through the points $p_j$ form a regular $M$-gon that $B({\bf 0},5T)$ is inscribed in. Denote by $I_j$ the edge of that $M$-gon that contains $p_j$. Observe that the length of each segment $I_j$ is at most $\mu$. By the pigeonhole principle there exists some $j$ such that at least
\begin{equation}\label{eq:pigeonhole}
\frac{CT^2}{M} \ge \frac{CT^2}{\frac{32 T}{\mu}} = \frac{C\mu T}{32}
\end{equation}    
rays from $\mathcal{R}$ intersects $I_j$. We rotate the whole plane about the origin so that the segment $I_j$ is vertical, and denote by $\mathcal{R}_j\subseteq\mathcal{R}$ the subset of rays of $\mathcal{R}$ that intersects $I_j$. Note that all of these rays intersect the vertical line segment $I\df\{T\}\times[-T,T]$ of length $2T$ (see Figure \ref{F:B}). Let $A'\subseteq I$ be the set of these intersection points. 

Recall that each ray $L_z$ in $\mathcal{R}_j$ is initiated from a point $p_z\in\partial B_z$, for some $z\in Y$, such that $p_z$ is $\DFEV$ by $L_z$. So by our choice of $\mu$ in (\ref{eq:mu}) and by Lemma \ref{lem:mu_def} there is a point $a_z\in B_z$ with $\dist(a_z,\partial B_z)\ge\mu$. This implies that the requirements in (\ref{eq:positive_distance_between_points_at_dist_T}) are satisfied, and we can apply Lemma \ref{lem:sparse_intersections_on_the_boundary} for any such pair of rays, connecting points of the form $a_z$ to $I_j$ (see Figure \ref{F:B}). This in turn implies that the points of $A'$ are at least $\mu/3$ apart from each other, and in particular no two rays of $\mathcal{R}_j$ intersect $I$ at the same point. We pick a subset $A\subseteq A'$ such that any two points in $A$ are at least $\eps/2$ apart. This is done by ordering the elements of $A'$ and pick every $\ceil{\frac{3\eps}{2\mu}}$ point in that order. Thus, using (\ref{eq:pigeonhole}), we obtain that
\begin{equation}\label{eq:c_def}
\frac{\# A}{\absolute{I}}\ge
\frac{\# A'}{\ceil{\frac{3\eps}{2\mu}}\absolute{I}}\ge
\frac{\frac{C\mu T}{32}}{\ceil{\frac{3\eps}{2\mu}}\cdot2T}\ge
\frac{C\mu^2}{2^7\eps}= 
\frac{\mu^2}{2^{9}\eps R^2}
\df c.
\end{equation}
We apply Proposition \ref{prop:DJ's_Lemma} with $c$ as in (\ref{eq:c_def}), $k=4N$, and $\eta=\eps/2$. In view of (\ref{eq:mu}) and (\ref{eq:c_def}) we obtain
\[c\eta = c\eps/2 = \frac{\mu^2\eps}{2^{10}\eps R^2} = \frac{\eps^{10}}{2^{42}R^{10}}
\quad\Longrightarrow\quad 
\log\frac{2}{c\eta} = 33 + 10\log\frac{2R}{\eps} = 33 + 10\log N.\]
Therefore the constant $j$ at (\ref{eq:DJ_constants}) is
\[j = \ceil{\frac{33 + 10\log N}{\log\frac{4N}{4N-1}}} = \ceil{\frac{33 + 10\log N}{\log(4N) - \log(4N-1)}} 
.\]
Then the constant $Z_0$ at (\ref{eq:DJ_constants}) is
\[Z_0=2\eta k^j = \eps(4N)^j .\]
Thus, given the assumption on $T$ in \eqref{eq:T_and_j}, we have $\absolute{I}=2T\ge Z_0$. Applying Proposition \ref{prop:DJ's_Lemma} we obtain an $x\ge 2\eta=\eps$, and a sub-interval $J\subseteq I$ of length $4Nx\ge 4R$ such that the subdivision of $J$ into $4N$ equal sub-intervals $J_1,\ldots, J_{4N}$ satisfies $J_i\cap A\neq\varnothing$ for every $i\in\{1,\ldots,4N\}$. Let $L_1,\ldots,L_{4N}\in\mathcal{R}_j$ be the rays that correspond to those $4N$ points of $A$. Let $\Omega$ be the convex hull of $I_j\cup J$, then $\Omega$ clearly contains balls of radius $R$. Let $B\subseteq\Omega$ be the ball of radius $R$ that is tangent to the line segments that bound $\Omega$ from above and below (see Figure \ref{F:B}). Then $B\cap Y\neq\varnothing$ and every point $p\in B\cap Y$ is within distance at most $\eps/2$ from at least one of the rays $L_1,\ldots,L_{4N}$, contradicting our assumption on the rays in $\mathcal{R}$.  

\begin{figure}[ht!]
	\begin{center}
		\includegraphics[scale=0.5]{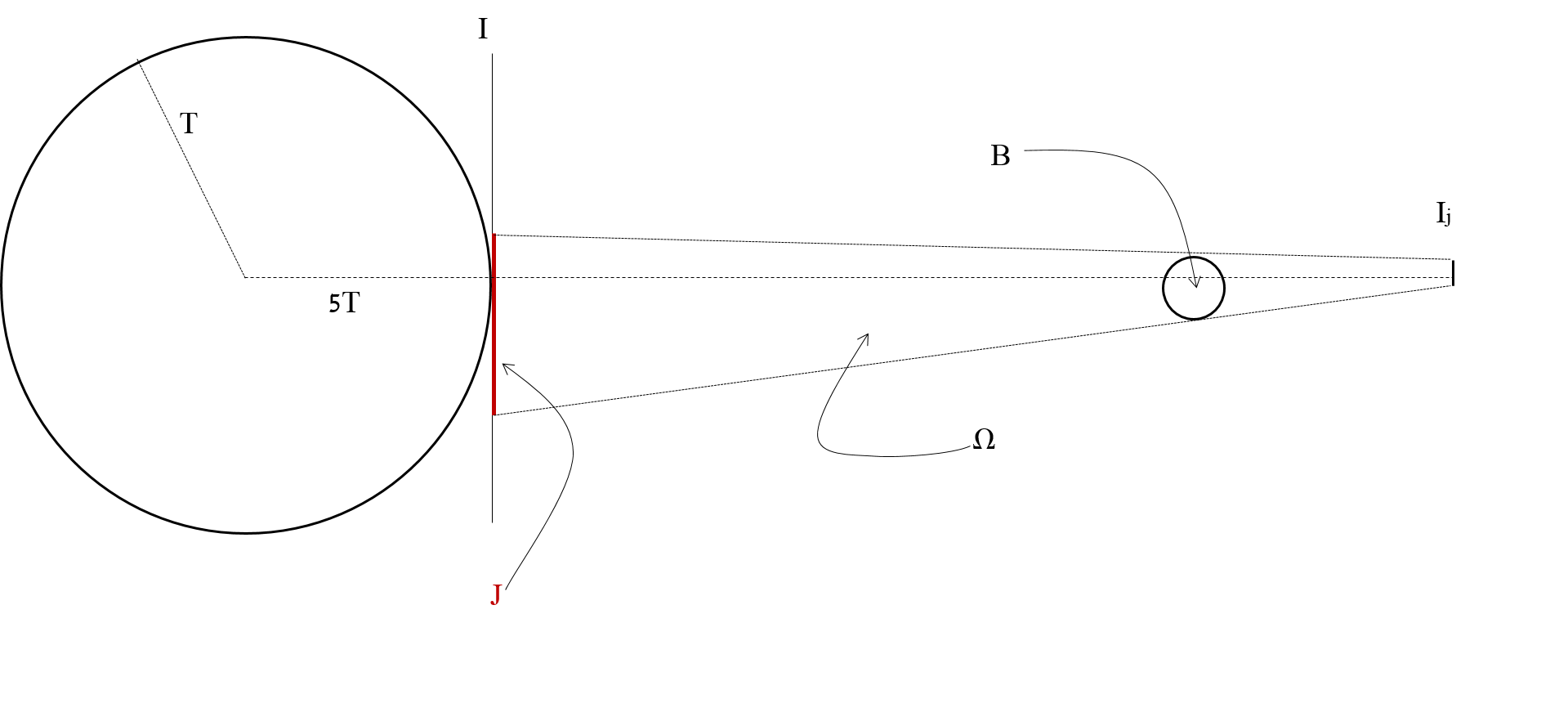}%[scale=0.7][height=5cm,width=12cm]
		\caption{}
		\label{F:B}
	\end{center}
\end{figure} 

\end{proof}
%%%%%%%%%%%%%%%%%%%%%%%%%%%%%%%%%%%

%%%%%%%%%%%%%%%%%%%%%%%%%%%%%%%%%%%%%%%%%%%%%%%%%%%%%%%%
\begin{proof}[\bf Proof of Theorem \ref{thm:Michael's_conj}]
Let $\eps>0$ and let $Y\subseteq\R^2$ be an $R$-dense set. By slightly increasing $R$ we may assume that $R$ is an integer multiple of $\eps$. Pick $T>0$ as in (\ref{eq:T_and_j}) and consider the ball $B\df B({\bf 0},T)$. It is easy to verify that $B$ contains at least $\frac{1}{2R^2}T^2$ disjoint balls of radius $R$, each one contains at least one element of $Y$. Denote by $Y' = Y \cap B({\bf 0},T)$, then we have established that $\# Y' \ge \frac{1}{2R^2}T^2$. 

Let $\delta=\frac{\eps^2}{2^7 R^2}$, as in Lemma \ref{lem:not_all_are_frontally_visible}. Note that this $\delta$ is consistent with our choice of $\delta$ at (\ref{eq:delta}) in Lemma \ref{lem:no_tangent_v}, for $\alpha = \frac{1}{2} < \frac{\pi}{6}$. So by Lemma \ref{lem:no_tangent_v}, in particular, every ball $B(z,\eps)$, for $z\in Y'$, is not $\DTEV$. Applying Lemma \ref{lem:not_all_are_frontally_visible} we obtain that for at most $\frac{1}{4R^2}T^2$ elements $z\in Y'$ the ball $B(z,\eps)$ is $\DFEV$. That leaves at least $\frac{1}{4R^2}T^2 = \frac{1}{2R^2}T^2-\frac{1}{4R^2}T^2$ many elements $z$ of $Y'$ for which the ball $B(z,\eps)$ is not $\DTEV$ and not $\DFEV$. Namely there are at least $\frac{1}{4R^2}T^2$ points, on the boundaries of these balls, which are not $\eps$-visible.  
\end{proof}
%%%%%%%%%%%%%%%%%%%%%%%%%%%%%%%%%%%%%%%%%%%%%%%%%%%%%%%%

%%%%%%%%%%%%%%%%%%%%%%%%%%%%%%%%%%%%%%%%%%%%%%%%%%%%%%%%
\section{Some Open Problems}\label{sec:open_problems}
We close with an open problem discussion.

\subsection{Generalizations to higher dimensions:} Most of our proofs rely on geometric ideas that, in some places, are hard to generalize. For the proof of Theorem \ref{thm:Michael's_conj}; Lemma \ref{lem:no_tangent_v} can be generalized to $d>2$ with a relatively small effort, simply because the idea that Figure \ref{F:123} (c) describes can be implemented in high dimension as well. On the other hand, Lemma \ref{lem:not_all_are_frontally_visible} may be more difficult to generalize, mostly because it relies on Proposition \ref{prop:DJ's_Lemma}. 

It would also be nice to generalize Theorem \ref{thm:sublinear_growth_and_no_visibility} to $\R^d$, $d>2$. 
%\commichael{No idea about how to do it, and no conjecture the way it should look like}

\subsection{The dual-Danzer problem:} 
The following problem is known as the \emph{dual-Danzer problem} (see \cite[p. 285]{BC}):

\begin{minipage}{130mm}
\quad Suppose that $Y\subseteq\R^d$ $(d\ge 2)$ satisfies $\limsup_{r\to\infty}\limits\frac{G_Y(r)}{r^d}>0$. Is it true that for every $n\in\N$ there is a convex set $K\subseteq\R^d$, of volume 1, such that $\#(K\cap Y)\ge n$?
\end{minipage}
\medskip

\noindent As a step toward a negative answer to the dual-Danzer problem, one may consider ther followng weaker version: 

\begin{question}\label{que:arbmany}
Given a discrete and relatively dense set $Y$, is there a sequence of convex sets $K_n$, of some fixed volume, so that $\#(K_n\cap Y)\ge n$?
\end{question}

\ignore{
\comyaar{I am not happy with this sentence.} Observe that Theorem \ref{thm:Michael's_conj} proves a general property of relatively dense sets in the plane, and hence may be found useful for a future solution to the dual-Danzer problem.  
\commichael{I would prefer not to suggest that Theorem 2 may be useful in addressing  Danzer problem.
We may mention it as a difficult problem concerning relatively dense sets in the plane. It should be discussed.
Question \ref{que:arbmany}  is somewhat ambiguos and the corresponding claim seems to be a stronger, not a weaker version.}
%\comyaar{any thought about this?}\commichael{No}
}

\subsection{More related questions:} 
In the spirit of the problems discussed in this paper, we suggest the following directions of study.

%In view of Theorem \ref{thm:small_growth_implies_full_visibility_1}, for $Y\subseteq \R^d$, if $G_Y(r)<\frac{r^{d-1}}{\log^{1+\varepsilon} r}$ then $\vis(Y)=\R^d$.
%\begin{question}
%How slow should $G_Y(r)$ grow in order to ensure that $\vis(Y)\neq\varnothing$?
%\end{question}

\begin{question}
Observe that if $Y\subseteq\R^d$ is a lattice then $\vis(Y)=\R^d\!\setminus\! Y$. Which subsets of $\R^d$ are characterized by this property?   
%\commichael{I doubt that there is a characterization or even a nice more general sufficient condition which is easy to state}
\end{question}

One can consider questions concerning the set of direction from which the points are visible. 
Note that this set of directions cannot be dense in any ball of $\mathbb{S}^{d-1}$, because then the union of the $\eps$-neighborhood of the rays will cover arbitrarily large balls in $\R^d$, contradicting $Y$ being relatively dense. On the other hand, the following proposition is straightforward. 

\begin{prop}
	Given $\eps>0$ and $x\in\R^2$, there is a relatively dense set $Y\subseteq\R^2$ such that $x\in\vis(Y,v,\eps)$ for infinitely many directions $v\in\mathbb{S}^1$.  
\end{prop}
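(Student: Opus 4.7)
The plan is to translate so that $x=\mathbf{0}$ and then construct $Y$ explicitly. Pick an exponentially decaying sequence of positive angles --- concretely $\theta_n := 2^{-n}$ for $n\ge 1$ --- and set $v_n := (\cos\theta_n,\sin\theta_n)\in\SS^1$. For each $n$ let
$T_n := \{p\in\R^2 : \dist(p, L_{\mathbf{0},v_n}) < \eps\}$
denote the open $\eps$-tube around the ray $L_{\mathbf{0},v_n}$, and set $U := \bigcup_{n\ge 1} T_n$. I then take $Y$ to be a maximal $\eps$-separated subset of $\R^2\setminus U$, which exists by a standard greedy / Zorn argument.

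The visibility property is immediate. For each $n$, $Y\cap T_n = \varnothing$, so every $y\in Y$ satisfies $\dist(y,L_{\mathbf{0},v_n})\ge\eps$, hence $\dist(L_{\mathbf{0},v_n},Y)\ge\eps$; and $\mathbf{0}\in T_n\subseteq U$ forces $\mathbf{0}\notin Y$, so $Y\setminus\{\mathbf{0}\}=Y$ and therefore $\mathbf{0}\in\vis(Y,v_n,\eps)$ for every $n$. Since the $v_n$ are pairwise distinct, this already yields the required infinite family of $\eps$-visible directions, and the whole proof reduces to showing that $Y$ is relatively dense in $\R^2$.

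Relative density of $Y$ follows from the geometric lemma that $U$ contains no ball of radius $r_0 := 4\eps$. Granted this, for any $p\in\R^2$ the ball $B(p,4\eps)$ meets $\R^2\setminus U$ at some point $q$, and maximality of the $\eps$-separated set $Y$ then produces $y\in Y$ with $|y-q|<\eps$ (possibly $y=q$); hence $|y-p|<5\eps$, proving that $Y$ is $5\eps$-dense. To prove the lemma I analyze $U$ slice by slice in the half-plane $\{x>0\}$. There, the tube $T_n$ is (up to the harmless factor $\cos\theta_n$ in the exact formula) the vertical strip $\{(x,y) : |y - x\tan\theta_n| < \eps/\cos\theta_n\}$, so at any fixed $x>0$ the slice $U\cap(\{x\}\times\R)$ is essentially the union $\bigcup_{n\ge 1}(x\theta_n-\eps,\,x\theta_n+\eps)$ of intervals of width $2\eps$ centered at $y = x/2^n$. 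The exponential decay of $\theta_n$ is crucial: the intervals for $n\ge n^*(x):=\lceil\log_2(x/(4\eps))\rceil$ fuse into a single cluster contained in $(-\eps,5\eps)$ of vertical extent at most $6\eps$, while the intervals for $n<n^*(x)$ form isolated strips of width $2\eps$ separated from the cluster and from each other by vertical gaps of length $\ge 2\eps$. A short elementary-geometry check --- tracking how the piecewise-linear cluster top $f(x) = x/2^{n^*(x)}+\eps$ varies with $x$ --- then shows that no disk of radius $4\eps$ can be inscribed in $U$.

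The main obstacle is precisely this geometric lemma, and the key conceptual point is that the exponential decay of $\theta_n$ is essential: a polynomial decay such as $\theta_n = 1/n$ would yield a \emph{parabolic} cluster of width $\sim\sqrt{x}$, which swallows arbitrarily large disks and so defeats the strategy entirely. The only technical nuisance is the discrepancy between the exact distance formula $|x\sin\theta_n - y\cos\theta_n|$ and its first-order approximation $|y - x\theta_n|$; this can be absorbed either by careful bookkeeping of the $\cos\theta_n$ factors, or --- more cleanly --- by rescaling to $\theta_n = \alpha\cdot 2^{-n}$ with $\alpha>0$ a sufficiently small absolute constant, so that $\cos\theta_n$ stays uniformly close to $1$ and all width estimates survive with only a harmless constant-factor loss.
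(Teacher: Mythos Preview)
Your approach is essentially the same as the paper's: choose an exponentially decaying sequence of angles (the paper uses $v_n=\pi/2^n$, you use $\theta_n=2^{-n}$), remove the open $\eps$-tubes around the corresponding rays from $x$, and argue that the complement still meets every ball of radius $C\eps$ for some absolute constant $C$, so that a relatively dense $Y$ can be placed inside it. The paper's proof is a two-line sketch that simply asserts the key geometric fact (``the union of the $\eps$-neighbourhoods of the rays does not cover a ball of radius greater than $2\eps$''), whereas you supply the supporting slice analysis; your version is thus more detailed but follows the identical strategy, and your observation that exponential decay is essential (polynomial decay would produce a widening parabolic region) is exactly the point behind the paper's phrase ``the rate of convergence of the sequence implies\ldots''.
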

	%\commichael{ OK}
\begin{proof}
	Construct $Y$ with respect to the given point $x$, so that for the sequence of directions $v_n=\frac{\pi}{2^n}$ the relation $x\in\vis(Y,v_n,\eps)$ will hold for every $n\in\N$. The rate of convergence of the sequence implies that the union of the $\eps$-neighborhood of the rays in these directions does not cover a ball of radius greater than $2\eps$. Hence we can place a point of $Y$ in any ball of radius $R>2\eps$ without violating those directions of visibility.  
\end{proof}

\begin{question}
	Suppose that $\eps>0$ and $Y\subseteq\R^2$ is relatively dense such that for every $x\in\R^2$ the set of directions $v$ for which $x\in\vis(Y,v,\eps)$ is finite. Does this imply that $\vis(Y;\eps)\neq\R^2$, or even $Y\nsubseteq\vis(Y;\eps)$ (namely, that for some $x$ the above set of directions is empty)? %maybe via some area argument?
%\commichael{No ideas.}
\end{question}

%%%%%%%%%%%%%%%%%%%%%

\ignore{
With relation to those questions, we mention the following observation that was made by Barak Weiss.
\commichael{Not sure needs to be included}
\begin{prop}\label{lem:compactness_argument_for_eps-hidden_pts}
	Let $Y\subseteq\R^d$ be a discrete set $(d\ge 2)$. Then for every $\eps>0$ and $x\in\R^d$ if $x\notin \vis(Y;\eps)$ then there is a finite set $Y_x\subseteq Y$ such that $x\notin \vis(Y_x;\eps)$. 
\end{prop}

\begin{proof}
	Suppose that $x\notin \vis(Y;\eps)$, then for every $v\in \mathbb{S}^{d-1}$ there is some $y_v\in Y\!\setminus\!\{x\}$ in the open $\eps$-neighborhood of the ray $L_{x,v}$. Then there is an open neighborhood $v\in U_v\subseteq\mathbb{S}^{d-1}$ such that for every $u\in U_v$ the $\eps$-neighborhood of the ray $L_{x,u}$ also contains $y_v$. The collection $\{U_v\}_{v\in\mathbb{S}^{d-1}}$ forms an open cover of $\mathbb{S}^{d-1}$, which by compactness has a finite sub-cover. Let $U_{v_1},\ldots,U_{v_k}$ be such a finite cover, and let $y_{v_1},\ldots,y_{v_k}\in Y$ be the above points corresponding to these neighborhoods. Then the set $Y_x\df \{y_{v_1},\ldots,y_{v_k}\}$ is as required. 
\end{proof}
}
%%%%%%%%%%%%%%%%%%%%%
%%%%%%%%%%%%%%%%%%%%%%%%%%%%%%%%%%%%%%%%%%

\end{document}